\theoremstyle{plain}
\newcounter{thmcount}[section]
\theoremstyle{definition}  
\newtheorem{Definition}[thmcount]{Definition}
\theoremstyle{plain}  
\newtheorem{Lemma}[thmcount]{Lemma}          
\newtheorem{Theorem}[thmcount]{Theorem}       
\newtheorem{Proposition}[thmcount]{Proposition} 
\newtheorem{Convention}[thmcount]{Convention} 
\title{\textbf{Pure Core Sets of $n \times n$ Matrices over Finite Fields}}
\author{Hongyu Wang AND  Yizhi Zhang}
\newcommand\st{\ \mathrm{s.t.}\ }
\newcommand\GL{\mathrm{GL}}
\newcommand\Gal{\mathrm{Gal}}
\renewcommand\span{\mathrm{span}}
\renewcommand\Im{\mathrm{Im}}
\newcommand\Z{\mathbb{Z}}
\newcommand\Fq{\mathbb{F}_q}
\newcommand\E{\mathbb{E}}
\newcommand\C{\mathcal{C}}
\newcommand\N{\mathcal{N}}
\newcommand\B{\mathcal{B}}
\begin{document}

\maketitle

 \begin{abstract}
 This paper studies the structure of core sets under different similarity classes. We investigate the influence of factors of the minimal polynomial with different degrees on the structure of core sets. When $F$ is a finite field of prime order, we study the upper bound on the size of a non-core set in a similarity class in $M_n(F)$.  We prove that as $|F|$ increases, the proportion of pure core sets among subsets of $M_n(F)$ tends to $1$.
 
 \vspace{1em}
 
\noindent \textbf{Keywords:} Null ideal, Matrix rings, Vector space.

\noindent \textbf{MSC(2020):} 16S50, 15A15, 15B33, 15A21.
\end{abstract}

\section{Introduction}
We first fix some notations: For a field $F$, let $M_n(F)$ denote the ring of $n \times n$ matrices over $F$,
and $\GL(n,F)$ the general linear group consisting of all invertible matrices in $M_n(F)$.
Let $I$ denote the identity matrix. By identifying $a\in F$ with $aI\in M_n(F)$,
we may consider $F\subseteq M_n(F)$.
Unless otherwise specified, vectors in this paper are column vectors, and $F^n$ denotes the $n$-dimensional column vector space over $F$, with its zero subspace denoted by $0$.
For a matrix $A\in M_n(F)$, let $\Im_F A$ denote the image space of the map $F^n\to F^n,x\mapsto Ax$. When clear from context, it is also denoted $\Im A$. Also, denote the minimal polynomial of $A$ by $m_A$.
Let $\C$ denote a similarity class in $M_n(F)$.
Let $\mu_\C$ denote the minimal polynomial of elements in $\C$, also abbreviated $\mu$.
For a root $\alpha$ of $\mu$, let $f_\alpha=\dfrac{\mu}{x-\alpha}$.

Regarding polynomial operations, we assume the indeterminate $x$ is central (i.e., commutes with all coefficients),
and polynomials in $M_n(F)[x]$ follow the right evaluation rule------before evaluating $x$,
the polynomial must be expressed such that $x$ appears only to the right of coefficients.
For example, the product $(Ax+B)(Cx)$ must first be simplified to $ACx^2+BCx$ before evaluation.
For detailed treatment of polynomials over noncommutative rings, see reference [1].

Let $S\subseteq M_n(F)$. This paper focuses on studying the zero ideal of the matrix set $S$, defined as follows:

\begin{Definition}

Define the zero ideal of $S$ as $N(S):=\{f\in M_n(F)[x]\mid f(A)= 0,\forall A\in S\}$.
\end{Definition}

Note that the coefficients of polynomials in $N(S)$ are elements of $M_n(F)$, which are noncommutative.

The central question of this paper is:
For a given $S\subseteq M_n(F)$, is its zero ideal $N(S)$ a two-sided ideal of $M_n(F)[x]$?

\begin{Definition}

A set $S\subseteq M_n(F)$ is called a core set if its zero ideal $N(S)$ is a two-sided ideal of $M_n(F)[x]$;
otherwise, $S$ is called a non-core set.
\end{Definition}

The study of this problem originates from integer-valued polynomials. The core of integer-valued polynomials is to characterize those polynomials that, when evaluated at elements of a specific subset, still lie in the original ring. For a commutative integral domain $D$ with field of fractions $K$, the classical ring of integer-valued polynomials is defined as:
\[
\mathrm{Int}(S, D) := \{ f \in K[x] \mid f(s) \in D,\ \forall s \in S \},
\]
where $S \subseteq D$. When $D = \mathbb{Z}$, the polynomials in $\mathrm{Int}(S, \mathbb{Z})$ are precisely those that yield integer values when evaluated at all integers in $S$.

When considering subsets $S \subseteq M_n(D)$ of the matrix ring, it remains an open problem whether the corresponding set of integer-valued polynomials
\[
\mathrm{Int}(S, M_n(D)) := \left\{ f \in M_n(K)[x] \mid f(A) \in M_n(D),\ \forall A \in S \right\}
\]
forms a ring [2, Question 2.1].

An equivalent proposition is provided in [2, 3]:
$\mathrm{Int}(S, M_n(R))$ is a ring if and only if for all nonzero $a\in R$,
the image of $S$ in $M_n(R/aR)$ is a core set.
Therefore, characterizing subsets $S$ for which $\mathrm{Int}(S,M_n(R))$ is a ring
is equivalent to characterizing core sets in $M_n(R/aR)$,
which is the problem of interest in this paper.

Recently, researchers have studied zero ideals in the general case, with particular attention to $2 \times 2$ and $3 \times 3$ matrices over fields [4, 5, 6]. It is easy to see that for any $n \geq 2$, the matrix ring $M_n(R)$ contains both core subsets and non-core subsets.
Less trivially, if $S \subseteq M_n(R)$ consists of scalar matrices,
then the evaluation behavior of polynomials at elements of $S$ coincides with that in the ordinary commutative case,
and in this case $S$ is always a core set.
More generally, the entire similarity class of a matrix $A$ is always a core set [4, Proposition 3.1].
A complete algorithmic characterization of core sets in $M_2(\mathbb{F}_q)$ is provided in [4], while some partial results on core sets in $M_n(\mathbb{F}_q)$ for $n \geq 3$ are given in [4, 6]. In this paper, we make use of the results from [4, 5, 6].

\begin{Proposition}\textup{([4, Proposition 3.1,3.4])}
Let $S\subseteq M_n(F)$, then

(1) $N(S)$ is a left ideal of $M_n(F)[x]$.

(2) Let $I$ be an index set, $\{S_i\}_{i\in I}$ a family of core sets, then $\bigcup\limits_{i\in I}S_i$ is a core set.

(3) Let $a\in F$, then $S$ is a core set $\iff S+a$ is a core set.

(4) Let $0\neq a\in F$, then $S$ is a core set $\iff aS$ is a core set.
\end{Proposition}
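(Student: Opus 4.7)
The plan is to verify the four parts in turn. Only part (1) involves a computation sensitive to the noncommutativity of the coefficients; parts (2)--(4) follow from standard observations about the map $S \mapsto N(S)$. For (1), closure under subtraction is immediate from the right evaluation rule, since $(f-g)(A) = f(A) - g(A)$. For closure under left multiplication, write $f = \sum_j f_j x^j \in N(S)$ and $h = \sum_i h_i x^i$. Because $x$ is central, $hf = \sum_{i,j} h_i f_j x^{i+j}$, and right-evaluating at $A \in S$ gives
\[
(hf)(A) \;=\; \sum_{i,j} h_i f_j A^{i+j} \;=\; \sum_i h_i \Bigl(\sum_j f_j A^j\Bigr) A^i \;=\; \sum_i h_i \cdot f(A) \cdot A^i \;=\; 0,
\]
where the rearrangement uses $A^{i+j} = A^j A^i$ to move $A^i$ to the right. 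Hence $hf \in N(S)$, and $N(S)$ is a left ideal.

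For (2), the identity $N(\bigcup_{i \in I} S_i) = \bigcap_{i \in I} N(S_i)$ is direct from the definition, and an arbitrary intersection of two-sided ideals is again a two-sided ideal. For (3) and (4), my approach is to exhibit a ring automorphism of $M_n(F)[x]$ carrying $N(S)$ onto $N(S+a)$ or $N(aS)$, and then invoke the fact that ring automorphisms send two-sided ideals to two-sided ideals. For (3), since the scalar $a$ is central in $M_n(F)$ and $x$ is central by assumption, the substitution $\phi_a : x \mapsto x - a$ (identity on coefficients) extends to a ring automorphism of $M_n(F)[x]$ with inverse $\phi_{-a}$; expanding $\phi_a(f) = \sum_j f_j (x-a)^j$ shows $\phi_a(f)(B) = f(B - a)$ for every $B \in M_n(F)$ under the right evaluation rule, so $\phi_a$ restricts to a bijection $N(S) \to N(S + a)$. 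Part (4) is analogous: the hypothesis $a \neq 0$ makes $a^{-1} \in F$ available, and the automorphism $\psi_a : x \mapsto a^{-1} x$ satisfies $\psi_a(f)(B) = f(a^{-1} B)$, whence $\psi_a(N(S)) = N(aS)$.

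The only step that is not purely formal is the rearrangement in (1), and it depends only on the fact that powers of a fixed matrix $A$ commute with each other, together with the right evaluation convention fixed in the paper. I do not anticipate a genuine obstacle; once the automorphisms in (3) and (4) are set up correctly, the remaining work is routine bookkeeping about $N(\cdot)$ and about ring automorphisms preserving the two-sided ideal property.
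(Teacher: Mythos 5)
Your argument is correct. The paper itself gives no proof of this proposition --- it is quoted from [4, Propositions 3.1 and 3.4] --- and your treatment (the rearrangement $h_i f_j A^{i+j}=h_i f_j A^j A^i$ for left-ideal closure, the identity $N(\bigcup_i S_i)=\bigcap_i N(S_i)$ for unions, and the substitution automorphisms $x\mapsto x-a$ and $x\mapsto a^{-1}x$, which are legitimate ring automorphisms precisely because $x-a$ and $a^{-1}x$ are central) is the standard route and matches the cited source.
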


\begin{Definition}([4, Definition 4.3])
For $\varnothing\neq S\subseteq M_n(F)$,
if the polynomial family $\{m_A|A\in S\}$ has a monic least common multiple,
then the monic least common multiple of $\{m_A|A\in S\}$ is called the minimal polynomial of $S$,
denoted $\mu_S$, and $S$ is said to be polynomial finite.
\end{Definition}

\begin{Proposition}\textup{([4] Theorem 4.4, Corollary 4.6)}
Assume $S\subseteq M_n(F)$ is polynomial finite, then

(1) $\mu_S\in N(S)$.

(2) $S$ is a core set $\iff N(S)=(\mu_S)$.

(3) $S$ is a non-core set if and only if $N(S)$ contains a nonzero polynomial of degree lower than $\mu_S$.
\end{Proposition}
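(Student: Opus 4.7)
Part (1) is the starting point and is essentially bookkeeping. Since $\mu_S$ is by definition the monic least common multiple inside $F[x]$ of the family $\{m_A\}_{A\in S}$, each $m_A$ divides $\mu_S$ in the commutative ring $F[x]$, say $\mu_S=q_Am_A$ with $q_A\in F[x]$. Because the coefficients of $q_A$ and $m_A$ lie in the central subring $F\subseteq M_n(F)$, the right-evaluation rule yields $\mu_S(A)=q_A(A)\,m_A(A)=0$, so $\mu_S\in N(S)$.

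The heart of the proposition is (2). My plan is to identify $M_n(F)[x]\cong M_n(F[x])$ via the natural ring isomorphism that regards a polynomial with matrix coefficients as a matrix of polynomial entries. Under this identification, the two-sided ideals of $M_n(F)[x]$ correspond bijectively to ideals of the commutative principal ideal domain $F[x]$, via the standard fact that the two-sided ideals of $M_n(R)$ over a commutative ring $R$ are exactly the sets $M_n(J)$ with $J\trianglelefteq R$. Hence every nonzero two-sided ideal of $M_n(F)[x]$ has the form $(g)$ for some monic $g\in F[x]$. Assuming $S$ is a core set, write $N(S)=(g)$: evaluating the scalar polynomial $g\in N(S)$ at each $A\in S$ forces $m_A\mid g$, hence $\mu_S\mid g$; conversely $\mu_S\in N(S)=(g)$ forces $g\mid\mu_S$. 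Comparing leading coefficients gives $g=\mu_S$, so $N(S)=(\mu_S)$. The reverse implication is free, because $(\mu_S)$ is generated by a central element and is therefore automatically a two-sided ideal.

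For (3), I would use central division by $\mu_S$. Given any $f\in N(S)$, since $\mu_S\in F[x]$ is central and monic we may write $f=q\mu_S+r$ with $\deg r<\deg\mu_S$. By Proposition 1.3(1), $N(S)$ is a left ideal, and since $\mu_S\in N(S)$ we obtain $r=f-q\mu_S\in N(S)$. Therefore $N(S)\supsetneq(\mu_S)$ if and only if some such $r\in N(S)$ is nonzero of degree less than $\deg\mu_S$. Combining this with (2) yields the stated characterization of non-core sets in both directions.

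\textbf{Main obstacle.} The only delicate point is the structural input used in (2): the claim that every two-sided ideal of the noncommutative ring $M_n(F)[x]$ is principal with a central generator. This does not follow tautologically from ordinary polynomial-ring facts; it rests on the isomorphism $M_n(F)[x]\cong M_n(F[x])$ together with the classification of two-sided ideals in matrix rings over a commutative base. Once this structural fact is accepted, parts (1), (2) and (3) collapse into short divisibility arguments that fit together cleanly.
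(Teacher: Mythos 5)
Your proof is correct and complete. Note that the paper itself gives no proof of this proposition --- it is quoted verbatim from reference [4] (Werner, Theorem 4.4 and Corollary 4.6) as a preliminary --- and your argument via the identification $M_n(F)[x]\cong M_n(F[x])$, the classification of two-sided ideals of matrix rings over a commutative base, and central division by the monic $\mu_S$ is essentially the standard proof given in that source.
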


\begin{Definition}([5] Definition 1.2)
If for every similarity class $\C$, the intersection $S\cap\C$ is a core set, then $S$ is called a pure core set.
\end{Definition}

Previous researchers emphasize that when $n$ is small, similarity classes and minimal polynomial classes are identical or nearly identical [4, 5, 6]. However, this is not always the case. Therefore, our definition of pure core sets here differs slightly from [5], adopting the finer partition of similarity classes. As we will see later, such a partition is necessary.

The question of how common core sets are was raised in [5]. The goal of this line of inquiry is to bound the probability that a randomly chosen subset of $M_n(F)$ is a core set. This question was addressed for $M_2(\mathbb{F}_q)$ in [5], where the exact count of core sets in each similarity class was determined.

We extend this result, building on [5],
by achieving this goal for $M_n(\mathbb{F}_q)$ where $q$ is prime,
determining the structure of the distribution of pure core sets within each similarity class of $M_n(\mathbb{F}_q)$,
and providing an algorithm for computing the number of core sets in a similatity class. Moreover, using this, we prove that:
as $q \to \infty$,
the probability that a randomly chosen subset of $M_n(\mathbb{F}_q)$ is a core set tends to 1.
Therefore, in the asymptotic sense as $q$ tends to infinity, almost all subsets of $M_n(\mathbb{F}_q)$ are core sets.

The following is a summary of the main results of this paper.

This paper provides the structure of the distribution of pure core sets in $M_n(\Fq)$ where $q$ is prime.
Building on [4], to address the problem of core set determination under finer similarity class partition, this paper first introduces sets $\mathcal{N}(A,g)$ and $\mathcal{B}(A,g)$ associated with a single matrix $A$ and a polynomial factor $g$. Based on these two types of sets, this paper establishes necessary and sufficient conditions for a polynomial finite set $S$ to be a core set, breaking down the problem of pure core sets within a similarity class into each polynomial factor. Then, based on the degree of the factors, the cases are divided into linear factors and higher-degree irreducible factors.

For the case of linear factors $x-a$ in the minimal polynomial, this paper further introduces $\mathcal{B}(A, f_a)$, establishing its connection with the orthogonal complement of the matrix image space. From this, a linear space criterion for core sets is derived:

\begin{Theorem}

Let $\C\subseteq M_n(F),\varnothing\neq S\subseteq \C$.

(1) If $S$ is a core set, then for any $a\in F$ satisfying $x-a\mid\mu$,
we have $\sum\limits_{A\in S}\Im f_a(A) = F^n$.

(2) When $\mu$ has only linear irreducible factors, the converse of the above statement also holds.
\end{Theorem}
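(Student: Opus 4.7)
My plan rests on the criterion from Proposition~1.5(3): since $S\subseteq\C$ is polynomial finite with $\mu_S=\mu$, $S$ is a core set iff $N(S)$ contains no nonzero polynomial of degree strictly less than $\deg\mu$. For part~(1) I argue the contrapositive. Suppose $V:=\sum_{A\in S}\Im f_a(A)$ is a proper subspace of $F^n$ for some root $a$ of $\mu$. Pick any nonzero $B\in M_n(F)$ with $V\subseteq\ker B$; this exists because $V\subsetneq F^n$. For every $A\in S$ we have $\Im f_a(A)\subseteq V\subseteq\ker B$, so $B\cdot f_a(A)=0$, and hence the polynomial $Bf_a(x)$ lies in $N(S)$. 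Since $f_a$ is monic, its leading coefficient in $Bf_a$ is $B\neq 0$, so $Bf_a$ is a nonzero element of $N(S)$ of degree $\deg\mu-1$, contradicting the criterion.

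For part~(2), assume $\mu=\prod_i(x-a_i)^{e_i}$ splits, fix a root $a$ of multiplicity $e:=e_a$, and take $P=\sum_k C_k x^k\in N(S)$ with $\deg P<\deg\mu$; the goal is $P=0$. Expanding $P$ around the central scalar $a$ yields $P(x)=\sum_\ell\Gamma_\ell(x-a)^\ell$ with $\Gamma_\ell:=\sum_k\binom{k}{\ell}C_k a^{k-\ell}\in M_n(F)$, so it suffices to show $\Gamma_0=\dots=\Gamma_{e-1}=0$ at every root $a$: then $(x-a)^{e}\mid P$ for each, hence $\mu\mid P$, forcing $P=0$. The main ingredient is the Jordan-chain formula
\[
P(A)\,v_j\;=\;\sum_{i=0}^{j}\Gamma_i\,v_{j-i},
\]
valid along any Jordan chain $v_0,v_1,\ldots$ of $A$ at $a$ (with $(A-a)v_0=0$ and $(A-a)v_j=v_{j-1}$), derived from $A^k v_j=\sum_i\binom{k}{i}a^{k-i}v_{j-i}$. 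Taking a chain of length $e$ (one exists because $(x-a)^e\mid\mu$) and using $P(A)=0$, induction on $k$ shows: once $\Gamma_0=\dots=\Gamma_{k-1}=0$, the equation for $j=k$ collapses to $\Gamma_k v_0=0$, where $v_0$ is the bottom eigenvector of the chain.

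To upgrade this per-chain vanishing to $\Gamma_k=0$, I identify $\Im f_a(A)$ with the span of those bottom vectors. Writing $f_a=(x-a)^{e-1}h(x)$ with $h(a)\neq0$ and using the primary decomposition of $F^n$ under $A$, $h(A)$ acts invertibly on $\ker(A-a)^e$ and as $0$ on the complementary summand, so $\Im f_a(A)=(A-a)^{e-1}\ker(A-a)^e$---precisely the span of the bottom vectors of the size-$e$ Jordan blocks of $A$ at $a$. Summing over all size-$e$ blocks of all $A\in S$, the hypothesis $\sum_{A\in S}\Im f_a(A)=F^n$ forces $\Gamma_k=0$, completing the induction and the proof of~(2). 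The main obstacle is the bookkeeping in part~(2)---synchronising the induction index $k$ with the use of size-$e$ Jordan blocks and rigorously identifying $\Im f_a(A)$ with the span of their bottom vectors---while the rest reduces to a standard ideal-theoretic construction and a Taylor-style interpolation.
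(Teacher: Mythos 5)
Your proof is correct, but it takes a noticeably different route from the paper's, especially in part (2). The paper proves both directions by first invoking its Proposition 3.4 (which reduces the core-set property to $\bigcap_{A\in S}\B(A,f_a)=\{0\}$ via the polynomial manipulations of Proposition 3.3) and then applying Lemma 4.2, which identifies the row space of $\B(A,f_a)$ with $(\Im f_a(A))^\perp$ and converts the intersection condition into $\sum_{A\in S}\Im f_a(A)=F^n$ by orthogonal-complement duality. Your part (1) is essentially that same duality made explicit: choosing a nonzero $B$ with $\sum_A \Im f_a(A)\subseteq\ker B$ is exactly exhibiting a nonzero element of $\bigcap_A\B(A,f_a)$, and then $Bf_a$ is the required low-degree element of $N(S)$. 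Your part (2), however, bypasses the $\B(A,g)$ formalism entirely: you take an arbitrary $P\in N(S)$ of degree below $\deg\mu$, Taylor-expand it at each root $a$, and run an interleaved induction along length-$e$ Jordan chains, using the identification $\Im f_a(A)=(A-aI)^{e-1}\ker(A-aI)^e$ (span of bottom vectors of maximal blocks) to promote the per-chain relations $\Gamma_k v_0=0$ to $\Gamma_k=0$; this correctly yields $\mu\mid P$, hence $P=0$. The trade-off is that the paper's machinery (Proposition 3.3 in particular) is built once and reused for irreducible factors of higher degree in Sections 5--6, whereas your argument is more elementary and self-contained but tied to the split case --- which is exactly the hypothesis of part (2), so nothing is lost for this theorem. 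Both arguments are sound; just make sure you note explicitly that $S\subseteq\C$ forces $\mu_S=\mu$, so Proposition 1.5(3) applies, and that divisibility by the central polynomials $(x-a_i)^{e_i}$ can be checked entrywise to conclude $\mu\mid P$.
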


Based on this criterion, this paper further partitions the similarity class $\mathcal{C}$ into subclasses according to the equivalence of $\Im f_a(A)$, proving that the subclasses have equal sizes and the number of subclasses is given by subspace counting formulas.

For the case of higher-degree irreducible factors $m$ in the minimal polynomial, this paper utilizes the perfectness of finite fields $\Fq$,
introducing the splitting field $K$ of $m$ over $\Fq$.
By exploiting the transitivity of the Galois group action, this paper proves that the intersection of $\mathcal{B}(A, \frac{\mu}{m})$ over $\mathbb{F}_q$ is zero if and only if the intersection of $\mathcal{B}(A, f_{\alpha_i})$ over all roots $\alpha_i$ in $K$ is zero. This equivalence, combined with the conclusion for linear factors, ultimately transforms the core set determination for higher-degree factors into the problem of deciding whether the sum of $\Im f_{\alpha}(A)$ equals $K^n$.
Based on the above results for individual factor analysis,
an upper bound on the size of non core sets in a similarity class can be given:

\begin{Theorem}

Let $\C\subseteq M_n(\Fq)$ and $S\subseteq \C$ be a non-core set, then $|S|\leq\dfrac{4|\C|}{q}$.

\end{Theorem}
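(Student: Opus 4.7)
The plan is to apply the core-set criteria developed earlier to localize the obstruction to non-coreness at a single irreducible factor of $\mu=\mu_\C$, reformulate it as a hyperplane containment in $\Fq^n$, and then count via the equal-fiber partition of $\C$.

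Since $S$ is non-core, at least one of the factor-wise criteria must fail: there exists an irreducible factor $m$ of $\mu$ with $\bigcap_{A\in S}\B(A,\mu/m)\neq 0$ over $\Fq$. For a linear $m$ this is the preceding theorem on linear factors, while for higher-degree $m$ it comes from the Galois equivalence between the intersection of $\B(A,\mu/m)$ over $\Fq$ and of $\B(A,f_\alpha)$ over the splitting field $K$. Using the identification of $\B(A,g)$ with the orthogonal complement of $\Im g(A)$, a nonzero vector in this intersection yields a hyperplane $W\subsetneq\Fq^n$ with $\Im(\mu/m)(A)\subseteq W$ for every $A\in S$. Thus $S\subseteq\{A\in\C:\Im(\mu/m)(A)\subseteq W\}$.

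Next I would count this set. Let $s:=\dim_{\Fq}\Im(\mu/m)(A)$ for $A\in\C$; this rank is a similarity invariant with $1\leq s\leq n-1$ (nonzero by minimality of $\mu$; strictly less than $n$ since $m(A)(\mu/m)(A)=0$ with $m(A)\neq 0$, forcing $\Im(\mu/m)(A)\subseteq\ker m(A)\subsetneq\Fq^n$). The map $A\mapsto\Im(\mu/m)(A)$ from $\C$ to the set of $s$-dimensional $\Fq$-subspaces of $\Fq^n$ is $\GL(n,\Fq)$-equivariant, and $\GL(n,\Fq)$ acts transitively on both sides, so the map is surjective with equinumerous fibers of size $|\C|/\binom{n}{s}_q$. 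Combined with the count $\binom{n-1}{s}_q$ of $s$-dimensional subspaces of $W$, this yields
\[
|S|\leq |\C|\cdot\frac{\binom{n-1}{s}_q}{\binom{n}{s}_q}=|\C|\cdot\frac{q^{n-s}-1}{q^n-1}\leq\frac{|\C|}{q-1}\leq\frac{4|\C|}{q},
\]
the final inequality holding for every prime power $q\geq 2$.

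The main obstacle is the localization step for higher-degree factors. The linear-factor criterion is directly usable, but the higher-degree case requires the Galois-theoretic identification of $\B(A,\mu/m)$ over $\Fq$ with $\bigcap_\alpha\B(A,f_\alpha)$ over $K$, combined with the orthogonal-complement description of $\B$, to produce the $\Fq$-hyperplane constraint needed for the counting step. Also essential is checking that the equal-fiber partition of $\C$ via $\Im(\mu/m)(A)$ behaves uniformly across all factor types, not only linear ones; this follows from the $\GL(n,\Fq)$-equivariance argument above, applied to an arbitrary polynomial in $\Fq[x]$.
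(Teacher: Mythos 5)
Your argument is correct for linear factors (and there it even gives the sharper bound $|S|\leq|\C|\frac{q^{n-r}-1}{q^n-1}\leq\frac{|\C|}{q-1}$), but the reduction you use in the higher-degree case has a genuine gap. You claim that $\bigcap_{A\in S}\B(A,\mu/m)\neq\{0\}$ yields a fixed hyperplane $W\subsetneq\Fq^n$ with $\Im(\mu/m)(A)\subseteq W$ for all $A\in S$, citing ``the identification of $\B(A,g)$ with the orthogonal complement of $\Im g(A)$.'' That identification (Lemma 4.2(2)) only holds when $\deg m_A-\deg g=1$, i.e.\ for $g=f_a$ with $x-a$ linear, because only then is $\B(A,g)$ a set of matrices. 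For an irreducible $m$ of degree $s\geq 2$, a nonzero element of $\bigcap_{A\in S}\B(A,\mu/m)$ is a polynomial $h$ of degree up to $s-1$ satisfying $h(A)\,(\mu/m)(A)=0$; this gives $\Im(\mu/m)(A)\subseteq\ker h(A)$, where the constraining subspace $\ker h(A)$ \emph{varies with} $A$. There is no fixed $\Fq$-hyperplane. A concrete counterexample: take $\mu=m$ irreducible of degree $n$, so $\mu/m=1$ and $\Im(\mu/m)(A)=\Fq^n$ for every $A$, which is contained in no proper subspace; yet a singleton $S=\{A\}$ is non-core (the polynomial $x-A\in\B(A,1)$ is nonzero). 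So the set you propose to count, $\{A\in\C:\Im(\mu/m)(A)\subseteq W\}$, can be empty or ill-defined while $S$ is nonempty and non-core, and the bound cannot be derived this way.

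The paper's proof avoids this by passing to the splitting field $K$ of $m$: Theorems 5.3--5.5 convert non-coreness at $m$ into $\sum_{A\in S}\Im f_\alpha(A)\subsetneq K^n$, which \emph{does} give a fixed hyperplane, but in $K^n$. The price is that the fibers of $A\mapsto\Im f_\alpha(A)$ do not sweep out the full Grassmannian of $r$-dimensional $K$-subspaces --- the image subspaces are exactly those spanned by vectors $v_i=\sum_j\alpha^j v_i^j$ with the $v_i^j$ linearly independent over $\Fq$ (Theorem 6.2; this is precisely the failure of Lemma 4.5(3) that the paper flags). Your $\GL(n,\Fq)$-equivariance argument does not apply to this constrained family, and the count must be done as in Theorem 6.3, which is where the weaker constant $4/q$ (via Lemma 2.1) comes from rather than the $1/(q-1)$ your Grassmannian ratio would suggest. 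To repair your proof you would need to replace the $\Fq^n$-hyperplane step by the $K^n$-hyperplane step and redo the fiber count over the restricted set of image subspaces.
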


Furthermore, combining probabilistic tools to analyze the distribution of pure core sets,
using Chernoff's inequality,
this paper proves:

\begin{Theorem}

Let $S\subseteq M_n(\Fq)$ be a random subset, then $\lim\limits_{q\to\infty}\mathbb{P}(S\text{ is a pure core set})=\lim\limits_{q\to\infty}\mathbb{P}(S\text{ is a core set})=1$.

\end{Theorem}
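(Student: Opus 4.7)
The plan is to use Theorem~1.9 (the bound $|S|\leq 4|\C|/q$ for non-core subsets of a similarity class) together with Chernoff's inequality and a union bound over similarity classes. First I would model a uniformly random subset $S\subseteq M_n(\Fq)$ by including each matrix independently with probability $1/2$. A useful preliminary reduction: by Proposition~1.3(2) a union of core sets is again a core set, so every pure core set is automatically a core set. Hence it suffices to prove $\lim_{q\to\infty}\P(S \text{ is a pure core set})=1$, and the analogous limit for core sets will follow from the inequality $\P(\text{core})\geq\P(\text{pure core})$.

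Next, I would fix a similarity class $\C$ and bound $\P(S\cap\C \text{ is non-core})$. If $|\C|<q/4$, then $4|\C|/q<1$, so by Theorem~1.9 any non-core subset of $\C$ would have size strictly less than $1$, i.e.\ be empty; since $\varnothing$ is trivially a core set (one has $N(\varnothing)=M_n(\Fq)[x]$, the whole ring), $S\cap\C$ is deterministically a core set in this regime. If instead $|\C|\geq q/4$, set $X_\C:=|S\cap\C|$, which is a sum of $|\C|$ independent $\mathrm{Bernoulli}(1/2)$ random variables with mean $|\C|/2$. For $q\geq 16$ the threshold $4|\C|/q$ lies at or below $|\C|/4$, well under the mean, so the standard multiplicative Chernoff bound yields
\[
\P(X_\C\leq 4|\C|/q)\;\leq\; e^{-c|\C|}\;\leq\; e^{-cq/4}
\]
for an absolute constant $c>0$.

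Finally I would apply a union bound over similarity classes. The rational canonical form parametrizes similarity classes in $M_n(\Fq)$ by sequences of invariant factors of total degree $n$, from which one reads off a $\mathrm{poly}_n(q)$ bound (concretely $O(q^n)$) on their number. Therefore
\[
\P(S\text{ is not a pure core set}) \;\leq\; \mathrm{poly}_n(q)\cdot e^{-cq/4} \;\longrightarrow\; 0\quad\text{as } q\to\infty,
\]
which completes the argument. The hardest conceptual input has already been absorbed into Theorem~1.9: without the $1/q$ factor in that bound, Chernoff could not beat the polynomial-in-$q$ number of similarity classes. What remains — choosing the random-subset model, verifying the polynomial class count, and handling the small-class regime separately — is essentially bookkeeping; the main thing to watch is that the Chernoff constant and the class-count exponent both depend only on $n$ and not on $q$.
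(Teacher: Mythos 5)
Your proposal is correct and follows essentially the same route as the paper: the bound $|S|\leq 4|\C|/q$ for non-core subsets of a similarity class, a Chernoff bound on $|S\cap\C|$ for each class, and control over the polynomially many similarity classes. The only cosmetic differences are that you dispose of small classes by noting the bound forces any non-core subset to be empty (the paper instead restricts to $\deg\mu\geq 2$ and shows $|\C|\geq q-1$ there) and you use a union bound where the paper uses independence of the classes; both work.
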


This asymptotic result indicates that over large finite fields, almost all subsets of $M_n(F)$ are pure core sets.

\noindent\textbf{Acknowledgements:}
We especially thank our advisor Professor Huaiqing Zuo for proposing the problem and providing guidance.
We especially thank Yufan Hua for some inspirations.

\section{Preliminary Propositions}

In what follows, let $F$ be a finite field, and denote $|F|=q$.

This section lists an inequailty lemma and linear algebra preliminary propositions needed for the proofs, most of which are easily verified.

\begin{Lemma}

Let $C=\prod\limits^{\infty}_{i=1}\dfrac{1}{1-q^{-i}}$, then $C\leq 4$.
\end{Lemma}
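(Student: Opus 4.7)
The plan is to reduce the claim to the worst case $q=2$ and then bound the product by peeling off a few explicit factors while controlling the rest with an elementary inequality.

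First I would note that for each fixed $i \geq 1$, the factor $(1-q^{-i})^{-1}$ is a decreasing function of $q \geq 2$, so $C = C(q)$ is itself decreasing in $q$. Thus it suffices to verify the bound in the tightest case $q = 2$.

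Next I would split the product at a small cutoff (taking $N=3$ turns out to suffice), writing
\[
C(2) \;=\; \prod_{i=1}^{3} \frac{1}{1-2^{-i}} \;\cdot\; \prod_{i=4}^{\infty}\frac{1}{1-2^{-i}}.
\]
The head factor is computed exactly as $2 \cdot \tfrac{4}{3} \cdot \tfrac{8}{7} = \tfrac{64}{21}$. For the tail I would use the elementary inequality $\tfrac{1}{1-t} \leq 1+2t$ valid on $[0,\tfrac{1}{2}]$ (which rearranges to $t(1-2t)\geq 0$) together with $1+a \leq e^{a}$; applied with $t = 2^{-i}$ for $i \geq 4$ this gives
\[
\prod_{i \geq 4}\frac{1}{1-2^{-i}} \;\leq\; \exp\!\Big(\sum_{i\geq 4} 2\cdot 2^{-i}\Big) \;=\; e^{1/4}.
\]
Combining both factors reduces the claim to the scalar inequality $\tfrac{64}{21}\cdot e^{1/4} \leq 4$, equivalently $e^{1/4} \leq \tfrac{21}{16}$; raising to the fourth power this becomes $e \leq \bigl(\tfrac{21}{16}\bigr)^{4} = \tfrac{194481}{65536}$, which is a direct numerical check since the right side exceeds $2.96 > e$.

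The main obstacle is that the true value of $C(2)$ is close to $3.46$, not far below the target $4$, so the estimate leaves very little slack: peeling off fewer than three explicit head terms or using a cruder tail inequality (e.g.\ $-\log(1-t) \leq t/(1-t)$ starting from $i=1$) would give a bound strictly larger than $4$. The delicate point is therefore matching the explicit head against a sufficiently sharp tail estimate; the choice $N=3$ with the $1+2t$ inequality is essentially the minimal such pairing.
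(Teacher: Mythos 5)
Your proof is correct: the monotonicity in $q$, the exact head value $2\cdot\tfrac{4}{3}\cdot\tfrac{8}{7}=\tfrac{64}{21}$, the tail bound via $\tfrac{1}{1-t}\le 1+2t$ on $[0,\tfrac12]$ giving $e^{1/4}$, and the final check $e\le\bigl(\tfrac{21}{16}\bigr)^4$ all hold. The overall strategy (reduce to $q=2$, then control the product through an exponential/logarithmic bound) matches the paper, but your decomposition is genuinely different. The paper never splits off a head: it applies the single sharper pointwise inequality $-\ln(1-x)\le x+x^2$ (valid on $[0,\tfrac12]$, proved by showing $h(x)=\ln(1-x)+x+x^2$ is nonnegative there) to \emph{every} factor at once, so the whole product is bounded by $\exp\bigl(\sum_i 2^{-i}+\sum_i 4^{-i}\bigr)=e^{4/3}<4$ with no exact evaluation of any factor. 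Your version trades that quadratic refinement for a cruder linear tail bound, compensating by computing three factors exactly; as you note, this leaves little slack and forces the cutoff $N=3$. The paper's route is slightly shorter and avoids the delicate head/tail balancing, while yours avoids having to invent and verify the auxiliary function $h$; both are elementary and complete.
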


\begin{proof}

Since $q\geq 2$, it suffices to prove $\prod\limits^{\infty}_{i=1}\dfrac{1}{1-2^{-i}}\leq 4$, i.e., $\sum\limits^{\infty}_{i=1}-\ln(1-2^{-i})\leq\ln 4$.
Construct $h(x)=\ln(1-x)+x+x^2,x\in[0,1)$, then $h'(x)=\dfrac{x(1-2x)}{1-x}$.
When $x\in(0,\dfrac{1}{2})$, $h'(x)>0$, hence for $x\in[0,\dfrac{1}{2}]$ we have $h(x)\geq h(0)=0$.
Therefore, for $x\in[0,\dfrac{1}{2}]$, $-\ln(1-x)\leq x+x^2$.
Thus $\sum\limits^{\infty}_{i=1}-\ln(1-2^{-i})\leq\sum\limits^{\infty}_{i=1}2^{-i}+\sum\limits^{\infty}_{i=1}4^{-i}=\dfrac{4}{3}<\ln 4$.
The original inequality is proved.
\end{proof}

\begin{Definition}

Let $V$ be a subspace of $F^n$, define the orthogonal complement of $V$ as
$$V^\perp:=\{x \in F^n\mid x^T y=0,\ \forall y \in V \}.$$

Let $I$ be an index set, $\{V_i\}_{i\in I}$ a family of subspaces of $F^n$, we denote the sum of this family of subspaces by $\sum\limits_{i\in I}V_i$.
\end{Definition}

\begin{Proposition}

Let $V$ be a subspace of $F^n$, then

(1) $\dim V^\perp=n-\dim V$.

(2) $(V^\perp)^\perp=V$.

(3) Let $I$ be an index set,
$\{V_i\}_{i\in I}$ a family of subspaces of $F^n$,
then $(\bigcap\limits_{i\in I} V_i)^\perp=\sum\limits_{i\in I}V^\perp_i$.
\end{Proposition}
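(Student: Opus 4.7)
The plan is to establish the three parts in order, with (1) doing the real work and (2), (3) following by formal manipulation. Throughout, what is used is that the bilinear form $\langle x,y\rangle = x^T y$ on $F^n$ is non-degenerate, which holds over any field, so nothing special about $F$ being finite is needed. For (1), I would fix a basis $v_1,\dots,v_k$ of $V$ and consider the linear map $\Phi\colon F^n\to F^k$ sending $x$ to $(x^T v_1,\dots,x^T v_k)^T$. By construction $\ker\Phi = V^\perp$, so by rank--nullity it suffices to show $\Phi$ is surjective. If $\Phi$ were not surjective, there would be a nonzero row vector $(c_1,\dots,c_k)$ annihilating $\Im\Phi$, giving $x^T(c_1 v_1 + \cdots + c_k v_k) = 0$ for every $x\in F^n$; non-degeneracy of the standard form then forces $\sum_i c_i v_i = 0$, contradicting linear independence of the $v_i$.

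Part (2) is immediate: $V \subseteq (V^\perp)^\perp$ holds directly from the definition, and applying (1) twice gives $\dim (V^\perp)^\perp = n - (n - \dim V) = \dim V$, so the inclusion is an equality. For (3), the direction $\sum_{i\in I} V_i^\perp \subseteq (\bigcap_{i\in I} V_i)^\perp$ is immediate from $\bigcap_i V_i \subseteq V_i$. For the reverse, I would first prove the companion identity $(\sum_{i\in I} W_i)^\perp = \bigcap_{i\in I} W_i^\perp$ directly: an element $x$ is orthogonal to every vector in $\sum_i W_i$ if and only if it is orthogonal to every vector in each $W_i$. Applying this with $W_i = V_i^\perp$ and invoking (2) gives $(\sum_i V_i^\perp)^\perp = \bigcap_i (V_i^\perp)^\perp = \bigcap_i V_i$. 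Taking $\perp$ of both sides and using (2) one more time yields $(\bigcap_i V_i)^\perp = \sum_i V_i^\perp$.

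In truth nothing here constitutes a real obstacle; the only non-formal step is the surjectivity argument for $\Phi$, and that reduces at once to non-degeneracy of the standard form. The main thing to be careful about is to avoid importing Euclidean intuition such as $V \oplus V^\perp = F^n$, which can fail over finite fields because $V \cap V^\perp$ need not be $0$ (a line may be isotropic). Only the dimension formula from (1) and the double-perp identity from (2) should be relied upon, and the proof of (3) should be executed by manipulating $\perp$ and $\sum/\bigcap$ rather than by choosing complements.
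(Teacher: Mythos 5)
Your proof is correct. The paper itself gives no argument for this proposition (it is listed among the "easily verified" linear algebra preliminaries of Section 2), so there is nothing to compare against; your rank--nullity argument for (1), the dimension-count upgrade of the trivial inclusion in (2), and the derivation of (3) from the companion identity $(\sum_i W_i)^\perp=\bigcap_i W_i^\perp$ together with (2) are all sound, and you are right to avoid the decomposition $V\oplus V^\perp=F^n$, which fails for isotropic subspaces over finite fields.
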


\begin{Definition}

Let $\varnothing\neq S\subseteq M_n(F)$. Consider $R_S=\{v\in F^n\mid v^T\text{ is a row vector of some }A\in S\}$. Then define the subspace $V_S=\span(R_S)$ generated by $R_S$ as the row space of $S$.
\end{Definition}

\begin{Proposition}

(1) If $S\subseteq T\subseteq M_n(F)$, then $V_{S}\subseteq V_{T}$.

(2) Let $I$ be an index set, $\{S_i\}_{i\in I}$ be a family of subsets of $M_n(F)$, then $V_{\bigcap\limits_{i\in I}S_i}\subseteq\bigcap\limits_{i\in I} V_{S_i}$.
\end{Proposition}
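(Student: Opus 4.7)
The plan is to unpack the definition of $V_S$ directly: $V_S = \span(R_S)$ where $R_S$ collects (the transposes of) rows of matrices in $S$. Both parts should reduce to set-theoretic monotonicity of $S \mapsto R_S$ followed by the fact that span is monotone and that span sends subsets of an intersection into the intersection of spans.

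For part (1), I would argue: if $S \subseteq T$, then any row of a matrix $A \in S$ is also a row of a matrix (namely the same $A$) in $T$, hence $R_S \subseteq R_T$. Since span is monotone, $V_S = \span(R_S) \subseteq \span(R_T) = V_T$.

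For part (2), the cleanest route is to invoke (1). For each $j \in I$, $\bigcap_{i\in I} S_i \subseteq S_j$, so by (1), $V_{\bigcap_{i\in I} S_i} \subseteq V_{S_j}$. Intersecting over $j \in I$ yields $V_{\bigcap_{i\in I} S_i} \subseteq \bigcap_{j\in I} V_{S_j}$, as desired.

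There is essentially no obstacle here; this is a routine bookkeeping lemma. The only point worth flagging is that the inclusion in (2) is generally strict: a vector in $\bigcap_j V_{S_j}$ need only be expressible as a linear combination of rows in each $S_j$ separately (with possibly different combinations), whereas membership in $V_{\bigcap_j S_j}$ requires the rows themselves to come from matrices lying in every $S_j$. So no equality can be claimed and none is asserted.
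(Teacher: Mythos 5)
Your proof is correct, and the paper itself gives no proof of this proposition, listing it among the preliminary facts that are ``easily verified''; your argument (monotonicity of $S\mapsto R_S$ plus monotonicity of span, then deducing (2) from (1)) is exactly the intended routine verification.
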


\begin{Definition}

Let $K/F$ be a finite extension. For a subspace $V$ of $F^n$, denote the space spanned by the vectors of $V$ in $K^n$ as $V_K$.
\end{Definition}

\begin{Proposition}

Let $V,W$ be subspaces of $F^n$, then

(1) $V_K\cap F^n=V$.

(2) $V\subseteq W\iff V_K\subseteq W_K$.

(3) $(V\cap W)_K=V_K\cap W_K$.
\end{Proposition}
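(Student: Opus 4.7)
I would start by picking an $F$-basis $e_1,\ldots,e_r$ of $V$ and extending it to an $F$-basis $e_1,\ldots,e_n$ of $F^n$. A standard argument (expand any hypothetical $K$-relation in terms of an $F$-basis of $K$ to recover $F$-relations) shows that an $F$-linearly independent family in $F^n$ is also $K$-linearly independent in $K^n$; combined with the obvious spanning, $e_1,\ldots,e_n$ is a $K$-basis of $K^n$. By construction $V_K=\mathrm{span}_K(e_1,\ldots,e_r)$. Any $v=\sum_{i=1}^{r} c_i e_i\in F^n$ with $c_i\in K$ admits a unique expansion in the $K$-basis $e_1,\ldots,e_n$ of $K^n$; comparing with the (also unique) $F$-expansion of $v\in F^n$ forces $c_i\in F$, so $v\in V$. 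The containment $V\subseteq V_K\cap F^n$ is trivial.

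\textbf{Part (2).} The forward direction is immediate from the definition of $V_K$. For the converse, assuming $V_K\subseteq W_K$, intersect both sides with $F^n$ and invoke (1): $V=V_K\cap F^n\subseteq W_K\cap F^n=W$.

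\textbf{Part (3).} The inclusion $\subseteq$ follows from (2), since $V\cap W$ sits inside both $V$ and $W$. The content is in $\supseteq$, for which I plan to use the standard basis-adapted-to-an-intersection trick: fix an $F$-basis $f_1,\ldots,f_k$ of $V\cap W$ and extend it to $F$-bases $f_1,\ldots,f_k,g_1,\ldots,g_s$ of $V$ and $f_1,\ldots,f_k,h_1,\ldots,h_t$ of $W$. The concatenated list is $F$-linearly independent in $V+W$ (the count $k+s+t$ matches $\dim V+\dim W-\dim(V\cap W)=\dim(V+W)$), hence $K$-linearly independent by the same argument as in (1). Any $v\in V_K\cap W_K$ then admits a unique $K$-expansion $\sum a_i f_i+\sum b_j g_j$ (from $v\in V_K$) and also $\sum a_i' f_i+\sum c_\ell h_\ell$ (from $v\in W_K$); subtracting and using $K$-linear independence of the concatenated list forces $b_j=0$, $c_\ell=0$, placing $v$ in $\mathrm{span}_K(f_1,\ldots,f_k)=(V\cap W)_K$.

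The only point that is not pure bookkeeping is the promotion of $F$-linear independence to $K$-linear independence, used in both (1) and (3); this is standard for finite extensions but should be recorded explicitly once and reused. With that in hand, the three parts chain together: (1) gives (2) via intersecting with $F^n$, and (2) gives the easy inclusion in (3), so the whole proposition reduces to the one basis construction carried out in the proof of (3).
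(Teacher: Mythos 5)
Your proof is correct. The paper states this proposition without proof (it is listed among the preliminary facts described as ``easily verified''), and your argument --- promoting $F$-linear independence to $K$-linear independence and then working with a basis adapted to $V$, $W$, and $V\cap W$ --- is exactly the standard verification the authors leave to the reader.
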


\section{Equivalent Polynomial Characterization of Core Sets}

To investigate the criteria for determining core sets, this section introduces polynomial sets $\N_F(A,g)$ and $\B_F(A,g)$ associated with a single matrix,
analyzes their algebraic structure, and establishes equivalent characterizations of core sets based on this.

\begin{Definition}

Let $A\in M_n(F), g\in F[x], g|m_A$. Define
$$\N_F(A,g)=\{f\in M_n(F)[x]\,\big|\,(fg)(A)=0\},$$
$$\B_F(A,g)=\{f\in\N_F(A,g)\,\big|\,\deg f<\deg m_A-\deg g\}.$$

When no confusion arises, $\N_F(A,g),\B_F(A,g)$ are abbreviated as $\N(A,g),\B(A,g)$ respectively.
\end{Definition}

\begin{Proposition}

Let $A\in M_n(F)$, and let monic polynomial $g\in F[x]$ satisfy $g|m_A$, then

(1) $\N(A,g)$ is a left ideal of $M_n(F)[x]$.

(2) Denote $k=\dfrac{m_A}{g}$, then $k\in\N(A, g)$, hence $\N(A,g)=\{f+hk\,\big|\,f\in\B(A,g),h\in M_n(F)[x]\}$.
\end{Proposition}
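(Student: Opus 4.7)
The plan is to prove (1) first by establishing closure under addition (immediate) and closure under left multiplication, then derive (2) from (1) via a division algorithm. The crucial auxiliary fact underlying both parts is: for any $p, q \in M_n(F)[x]$, if $q(A) = 0$ under the right evaluation rule, then $(pq)(A) = 0$. To prove this, I would write $p = \sum_i p_i x^i$, $q = \sum_j q_j x^j$, so that since $x$ is central in the polynomial ring we have $(pq)(A) = \sum_{i,j} p_i q_j A^{i+j}$; using $A^{i+j} = A^j A^i$ and associativity, this regroups as $\sum_i p_i \bigl(\sum_j q_j A^j\bigr) A^i = \sum_i p_i q(A) A^i$, which vanishes when $q(A) = 0$. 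Applying this with $q = fg$ shows that $hf \in \N(A,g)$ for any $h \in M_n(F)[x]$ and any $f \in \N(A,g)$, establishing part (1).

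For part (2), the membership $k \in \N(A,g)$ is immediate since $(kg)(A) = m_A(A) = 0$. The inclusion $\{f+hk \mid f\in\B(A,g),\ h\in M_n(F)[x]\} \subseteq \N(A,g)$ then follows from part (1) together with $\B(A,g)\subseteq\N(A,g)$. For the reverse inclusion, given any $p \in \N(A,g)$, I would apply right Euclidean division by the monic polynomial $k$ in the noncommutative ring $M_n(F)[x]$, writing $p = hk + r$ with $\deg r < \deg k = \deg m_A - \deg g$. Since $p$ and $hk$ both belong to the left ideal $\N(A,g)$, so does $r$; combined with the degree bound, this places $r$ in $\B(A,g)$, yielding the desired decomposition $p = r + hk$.

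The main obstacle I expect lies in handling the noncommutative evaluation carefully. The identity $(pq)(A) = \sum_i p_i q(A) A^i$ is \emph{not} the naive factorization $p(A)q(A)$, which generally fails because the coefficients $p_i$ need not commute with $q(A)$; the correct rearrangement uses only the fact that pure powers of $A$ commute with one another, and the convention that $x$ has been pushed to the right before substitution. Once this identity is in hand, the division-algorithm step in (2) poses no further difficulty, since the leading coefficient $1$ of $k$ is central and the noncommutative Euclidean algorithm applies verbatim.
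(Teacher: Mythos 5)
Your proof is correct. The paper states this proposition without proof (treating it as routine), and your argument supplies exactly the standard verification it implicitly relies on: the right-evaluation product identity $(pq)(A)=\sum_i p_i\,q(A)\,A^i$ (the key fact from reference [1] underlying the whole setup), applied with $q=fg$ for part (1), followed by division with remainder by the monic central polynomial $k=m_A/g$ for part (2). Your explicit warning that $(pq)(A)\neq p(A)q(A)$ in general, and that the rearrangement only uses commutativity of powers of $A$, is precisely the point that needs care here, and you handle it correctly.
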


Utilizing the properties of $\N(A,g)$ and $\B(A,g)$ above,
we can establish equivalent conditions for a polynomial finite set $S$ to be a core set.

\begin{Proposition}

Assume $S$ is polynomial finite, and $p$ is a prime factor of $\mu_S$ with multiplicity $c$.
Denote $S_p=\{A\in S\,\big|\,p^c|m_A\}$, then the following propositions are equivalent:

(1) $\forall f\in N(S), p^c|f$.

(2) $\bigcap\limits_{A\in S_p}\B(A,\dfrac{m_A}{p^c})=\{0\}$.

(3) $\bigcap\limits_{A\in S_p}\B(A,\dfrac{m_A}{p})=\{0\}$.
\end{Proposition}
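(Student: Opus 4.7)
The plan is to establish the cycle $(2) \Rightarrow (1) \Rightarrow (3) \Rightarrow (2)$. The central algebraic tool is that $(fg)(A) = f(A) g(A)$ whenever $g \in F[x]$ is central, so evaluations of distinct polynomials in $F[A]$ commute even though they need not commute with values of matrix-coefficient polynomials. Write $\mu_S = p^c \bar q$ with $\bar q := \mu_S/p^c$ coprime to $p$; for each $A \in S$ decompose $m_A = p^{c_A} q_A$ with $\gcd(p, q_A) = 1$ and $c_A \le c$, so $A \in S_p$ iff $c_A = c$, and in that case $m_A/p^c = q_A$ and $m_A/p = p^{c-1} q_A$. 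Also set $s_A := \bar q/q_A \in F[x]$, which is well-defined since $q_A \mid \bar q$.

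The implication $(2) \Rightarrow (1)$ is direct. Given $f \in N(S)$, right-divide by the central polynomial $p^c$ to write $f = p^c h + r$ with $\deg r < c \deg p$. For each $A \in S_p$, evaluation gives $r(A) = -h(A) p(A)^c$, so $r(A) q_A(A) = -h(A) m_A(A) = 0$. Thus $r \in \bigcap_{A \in S_p} \B(A, m_A/p^c) = \{0\}$ by (2), so $p^c \mid f$. For $(1) \Rightarrow (3)$, take $f \in \bigcap_{A \in S_p} \B(A, m_A/p)$ and set $g := f \cdot (\mu_S/p)$. For $A \in S_p$, the factorization $\mu_S/p = s_A \cdot (m_A/p)$ together with the central-commutation rule gives $g(A) = f(A)(m_A/p)(A) s_A(A) = 0$. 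For $A \in S \setminus S_p$, the inequality $c_A \le c-1$ forces $(\mu_S/p)(A) = p(A)^{c-1-c_A} \cdot m_A(A) \cdot s_A(A) = 0$, so $g(A) = 0$ again. Hence $g \in N(S)$, and (1) yields $p^c \mid g = p^{c-1} f \bar q$, so $p \mid f \bar q$; coprimality of $p$ and $\bar q$ in $F[x]$ together with Bezout yields $p \mid f$, and the degree bound $\deg f < \deg p$ forces $f = 0$.

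The main step is $(3) \Rightarrow (2)$, proved by induction on the power of $p$ extractable from $r$. Let $r \in \bigcap_{A \in S_p} \B(A, m_A/p^c)$, so $\deg r < c \deg p$ and $r(A) q_A(A) = 0$ on $S_p$. Assume inductively that $r = p^k r_k$ with $r_k(A) p(A)^k q_A(A) = 0$ on $S_p$ for some $0 \le k \le c-1$ (the case $k = 0$ is the hypothesis). Right-divide $r_k = p r_{k+1} + h_k$ with $\deg h_k < \deg p$; substituting yields $h_k(A) p(A)^k q_A(A) = -r_{k+1}(A) p(A)^{k+1} q_A(A)$. Multiplying on the right by $p(A)^{c-1-k}$ and using that $p(A)^{c-1-k}$ and $q_A(A)$ commute inside $F[A]$, one obtains
\[
h_k(A) \cdot (m_A/p)(A) \;=\; h_k(A) p(A)^{c-1} q_A(A) \;=\; -r_{k+1}(A) p(A)^c q_A(A) \;=\; -r_{k+1}(A) m_A(A) \;=\; 0.
\]
Hence $h_k \in \bigcap_{A \in S_p} \B(A, m_A/p) = \{0\}$ by (3), so $r_k = p r_{k+1}$ and the induction advances. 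After $c$ iterations, $r = p^c r_c$ with $\deg r_c < 0$, forcing $r_c = 0$ and hence $r = 0$.

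The main obstacle is the commutation bookkeeping in the inductive step of $(3) \Rightarrow (2)$: the matrix coefficients making up $h_k(A)$ need not commute with $p(A)$ or $q_A(A)$, so one cannot rearrange products involving $h_k(A)$ freely. The argument succeeds because the sole nontrivial rearrangement only swaps $p(A)^{c-1-k}$ past $q_A(A)$ on the right of the fixed matrix factor $h_k(A)$, keeping $h_k(A)$ pinned on the far left; this is precisely how the noncommutativity of $M_n(F)[x]$ is tamed by the centrality of $p$ and $q_A$.
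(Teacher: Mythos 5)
Your proof is correct, and it establishes the same cycle of implications as the paper ($2\Rightarrow 1\Rightarrow 3\Rightarrow 2$; the paper writes the contrapositives $\neg 1\Rightarrow\neg 2\Rightarrow\neg 3\Rightarrow\neg 1$). Your arguments for $(2)\Rightarrow(1)$ and $(1)\Rightarrow(3)$ are, modulo contraposition, identical to the paper's: division with remainder by the central polynomial $p^c$ in the first, and multiplication by $\mu_S/p$ followed by coprimality of $p$ with $\mu_S/p^c$ in the second. The one place you genuinely diverge is $(3)\Rightarrow(2)$. The paper takes a nonzero $f\in\bigcap_{A\in S_p}\B(A,m_A/p^c)$, multiplies by $p^v$ so that the exact $p$-multiplicity becomes $c-1$, reduces modulo $p^c$, and divides by $p^{c-1}$ to produce a nonzero element of $\bigcap_{A\in S_p}\B(A,m_A/p)$ in a single step; you instead run a $c$-stage induction that peels off one factor of $p$ at a time, invoking (3) at each stage to kill the remainder $h_k$ and concluding $r=0$ from the degree bound. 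Both are sound: the paper's version is shorter and yields an explicit witness for the contrapositive, while yours avoids any discussion of the exact $p$-multiplicity of a matrix-coefficient polynomial and keeps every commutation step explicit, which is arguably cleaner in the noncommutative setting.
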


\begin{proof}

Denote the negations of (1)(2)(3) as $(\neg 1)(\neg 2)(\neg 3)$ respectively. Also denote $X=\bigcap\limits_{A\in S_p}\B(A,\dfrac{m_A}{p^c}), Y=\bigcap\limits_{A\in S_p}\B(A,\dfrac{m_A}{p})$.

$(\neg 1 \implies \neg 2):$ Take $f\in N(S)$ such that $f$ is not divisible by $p^c$. Let $h$ be the remainder of $f$ modulo $p^c$.
$\forall A\in S_p$, the remainder of $\dfrac{m_Af}{p^c}$ modulo $m_A$ is $\dfrac{m_Ah}{p^c}$, hence $\dfrac{m_Ah}{p^c}(A)=\dfrac{m_Af}{p^c}(A)=0, h\in\B(A,\dfrac{m_A}{p^c})$.
Thus $0\neq h\in X, X\neq\{0\}$.
    
$(\neg 2 \implies \neg 3):$ From $X\neq\{0\}$, we can take $0\neq f\in X$.
Since $\deg f<\deg p^c$, the multiplicity of $p$ as a prime factor of $f$ is at most $c-1$ (possibly 0),
so we can take $v\in\Z_+$ such that $p$ is a factor of $fp^v$ with multiplicity $c-1$.
Let $g$ be the remainder of $fp^v$ modulo $p^c$, then $p$ is also a factor of $g$ with multiplicity $c-1$,
from which it can be verified that $0\neq \dfrac{g}{p^{c - 1}}\in Y$.

$(\neg 3 \implies \neg 1):$ From $Y\neq\{0\}$, we can take $0\neq f\in Y$. Since $\deg f<\deg p$, $f$ and $p$ are coprime.
We prove that $\dfrac{f\mu_S}{p}\in N(S)$.
For $A\notin S_p, \dfrac{\mu_S}{p}(A)=0$; for $A\in S_p, \dfrac{fm_A}{p}(A)=0$.
Then from $\dfrac{\mu_S}{p}|\dfrac{f\mu_S}{p}$ and $\dfrac{fm_A}{p}|\dfrac{f\mu_S}{p}$, we have $\dfrac{f\mu_S}{p}\in N(S)$,
but $p^c$ does not divide $\dfrac{f\mu_S}{p}$.
\end{proof}

Combining with Proposition 1.5, the above proposition can be directly extended to a criterion for determining core sets:

\begin{Proposition} 

Assume $S$ is polynomial finite, and the standard factorization of $\mu_S$ is $\mu_S = \prod\limits^k_{i=1}p_i^{c_i}$.
Denote $S_p=\left\{A\in S\,\big|\,p^c|m_A\right\}$, then
$$S\text{ is a core set}\iff\forall 1\leq i\leq k,\bigcap_{A\in S_{p_i}}\B\left(A,\dfrac{m_A}{p_i}\right)=\{0\}.$$

In particular, if there exists $m\in F[x]$ such that $\varnothing\neq S\subseteq\C$, and let all prime factors of $\mu$ be $p_1,p_2,\cdots,p_k$, then
$$S\text{ is a core set}\iff\forall 1\leq i\leq k,\bigcap_{A\in S}\B\left(A,\dfrac{\mu}{p_i}\right)=\{0\}.$$
\end{Proposition}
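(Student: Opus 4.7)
The plan is to chain together Proposition 1.5(2) --- which gives that $S$ is a core set iff $N(S)=(\mu_S)$ --- with the preceding proposition applied to each prime factor of $\mu_S$.

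First I would note that $(\mu_S)\subseteq N(S)$ always, since $\mu_S\in N(S)$ by Proposition 1.5(1) and $\mu_S\in F[x]$ lies in the center of $M_n(F)[x]$, so the two-sided ideal it generates sits inside the left ideal $N(S)$. Thus $S$ is a core set iff $N(S)\subseteq(\mu_S)$, i.e.\ iff every $f\in N(S)$ is divisible by $\mu_S$.

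Second, I would reduce divisibility by $\mu_S=\prod_{i=1}^{k}p_i^{c_i}$ to divisibility by each $p_i^{c_i}$ separately. Because $\mu_S$ is central and the $p_i^{c_i}$ are pairwise coprime in $F[x]$, a standard CRT argument (using scalar Bezout coefficients in $F[x]$) yields a ring isomorphism
$$M_n(F)[x]/(\mu_S)\;\cong\;\prod_{i=1}^{k}M_n(F)[x]/(p_i^{c_i}).$$
Consequently, $\mu_S\mid f$ in $M_n(F)[x]$ iff $p_i^{c_i}\mid f$ for every $i$. Therefore $S$ is a core set iff for each $i$ and every $f\in N(S)$, one has $p_i^{c_i}\mid f$.

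Third, I would apply the preceding proposition with $p=p_i$ and $c=c_i$, invoking the equivalence of its statements (1) and (3): the condition ``every $f\in N(S)$ is divisible by $p_i^{c_i}$'' is the same as $\bigcap_{A\in S_{p_i}}\B(A,m_A/p_i)=\{0\}$. Taking the conjunction over $i$ gives the general equivalence. For the ``in particular'' clause with $\varnothing\neq S\subseteq\C$, every $A\in S$ satisfies $m_A=\mu$, so $\mu_S=\mu$, the divisibility $p_i^{c_i}\mid m_A$ is automatic, $S_{p_i}=S$, and $m_A/p_i=\mu/p_i$; substitution yields the stated form.

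The main obstacle I anticipate is making the CRT step precise in the noncommutative ring $M_n(F)[x]$. What saves it is the centrality of $\mu_S$ together with the fact that the Bezout identity $\sum u_i(\mu_S/p_i^{c_i})=1$ can be chosen with $u_i\in F[x]$, so multiplication by the idempotents $u_i(\mu_S/p_i^{c_i})$ commutes with every matrix coefficient and the decomposition is genuinely a ring decomposition. Once this is in place, the rest is routine assembly of the prior propositions.
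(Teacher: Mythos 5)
Your proposal is correct and follows essentially the same route as the paper: reduce to the condition that every $f\in N(S)$ is divisible by each $p_i^{c_i}$ via Proposition 1.5, then invoke the equivalence $(1)\iff(3)$ of the preceding proposition for each prime factor. The paper treats the reduction of divisibility by $\mu_S$ to divisibility by each $p_i^{c_i}$ as immediate, whereas you spell it out with a central-CRT/Bezout argument; this is just added detail, not a different method.
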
 

\begin{proof}

By Proposition 1.5, $S$ is a core set if and only if for $1\leq i\leq k$, $\forall f\in N(S), p^c_i|f$. Using Proposition 3.3 gives the proof.
\end{proof}

This result transforms the determination of core sets into testing the intersection of $\B(A,\dfrac{\mu}{p_i})$, providing a criterion for concretely analyzing core sets in different similarity classes. The subsequent parts of this paper will separately analyze linear and higher-degree factors of the minimal polynomial, giving criteria and size estimates for core sets.

\section{Analysis of Linear Factors}

In this section, we handle the case satisfying the following convention:
\begin{Convention}

Let $\C\subseteq M_n(F),\varnothing\neq S\subseteq\C, a\in F, x-a\mid\mu$.
\end{Convention}

In this case, nonzero polynomials in $\B(A,f_a)$ have degree $0$,
i.e., $\B(A,f_a)$ can be naturally regarded as a subset of $M_n(F)$ (i.e., a set of matrices).
This section will use linear algebra tools,
to characterize the structure of such $\B(A,f_a)$, and further transform the criterion given by Proposition 3.4, establishing a linear space description of core sets.

\begin{Lemma}

Let $\varnothing\neq S\subseteq \C, A\in S$, then

(1) $\bigcap\limits_{A\in S}\B(A,f_a)=\{0\}\iff\bigcap\limits_{A\in S}V_{\B(A,f_a)}=0$.

(2) $V_{\B(A,f_a)}=(\Im f_a(A))^\perp$.
\end{Lemma}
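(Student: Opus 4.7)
The plan is to reduce both parts of the lemma to a single linear-algebra observation about right annihilators of $f_a(A)$. Since $A \in \C$ we have $m_A = \mu$ and $\deg f_a = \deg \mu - 1$, so the degree constraint in the definition of $\B(A,f_a)$ forces every polynomial in it to have degree $< 1$, i.e. to be a constant matrix. Hence $\B(A,f_a)$ may be identified with the set of matrices $B \in M_n(F)$ satisfying $B \cdot f_a(A) = 0$, and I will work with this identification throughout.

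For part (2) the strategy is to read the equation $B f_a(A) = 0$ row by row. Writing the rows of $B$ as $b_1^T, \ldots, b_n^T$, the equation $B f_a(A) = 0$ is equivalent to each $b_i^T$ annihilating every column of $f_a(A)$; since $\Im f_a(A)$ is exactly the column span of $f_a(A)$, this means $b_i \in (\Im f_a(A))^\perp$ for every $i$. This gives the inclusion $R_{\B(A,f_a)} \subseteq (\Im f_a(A))^\perp$. For the reverse inclusion, given any $v \in (\Im f_a(A))^\perp$, I would exhibit $v$ as a row of a matrix in $\B(A,f_a)$ by taking the matrix $B_v$ with $v^T$ in its first row and zeros elsewhere; all rows of $B_v$ lie in $(\Im f_a(A))^\perp$, so $B_v \in \B(A,f_a)$ and $v \in R_{\B(A,f_a)}$. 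Since $(\Im f_a(A))^\perp$ is already a subspace, passing to spans yields $V_{\B(A,f_a)} = (\Im f_a(A))^\perp$.

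For part (1) the same row-by-row interpretation gives the clean reformulation: $B \in \bigcap_{A \in S} \B(A,f_a)$ if and only if every row of $B$ lies in $\bigcap_{A \in S} V_{\B(A,f_a)}$, where I use (2) to replace each $(\Im f_a(A))^\perp$ by $V_{\B(A,f_a)}$. The $(\Leftarrow)$ direction is then immediate: if that intersection of subspaces is $0$, every row of such a $B$ is zero, hence $B = 0$. For $(\Rightarrow)$, given $v$ in the intersection of row spaces, I would apply the same one-row construction $B_v$ as in part (2); all its rows lie in $\bigcap_{A} V_{\B(A,f_a)}$, so $B_v$ belongs to every $\B(A,f_a)$, hence $B_v = 0$ by hypothesis, forcing $v = 0$.

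There is no conceptual obstacle; the only thing to get right is the bookkeeping between left multiplication and column spaces, and between rows of a matrix (elements of $R_S$) and the orthogonality in $F^n$. Once the identification $\B(A,f_a) \subseteq M_n(F)$ is made and the condition $B f_a(A) = 0$ is translated row by row, both parts follow from the one-row matrix construction used to realize any prescribed vector as a row of an element of $\B(A,f_a)$.
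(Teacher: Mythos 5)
Your proof is correct and takes essentially the same route as the paper: both rest on identifying $\B(A,f_a)$ with the set of matrices $B$ satisfying $Bf_a(A)=0$ and on the one-row matrix construction realizing any vector of $(\Im f_a(A))^\perp$ as a row of an element of $\B(A,f_a)$. The only organizational difference is that you establish the row-by-row characterization (part (2)) first and deduce part (1) from it, whereas the paper argues part (1) directly via the left-ideal property of $\B(A,f_a)$; both are valid.
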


\begin{proof}

(1) ``$\Longrightarrow$": Let $v\in\bigcap\limits_{A\in S}V_{\B(A,f_a)}$ be a row vector.
For any $A\in S$, $v$ is a linear combination of row vectors of some matrices in $\B(A,f_a)$.
Assume these matrices are $P_1,P_2,\cdots,P_k$.
It is easy to verify that $\B(A,f_a)$ is a left ideal of $M_n(F)$.
Consider the matrix $H$ whose first row is $v$ and all other rows are zero,
Clearly $H$ is a matrix coefficient linear combination of $P_1,P_2,\cdots,P_k$.
This shows $H\in\B(A,f_a)$. By the arbitrariness of $A$ and the assumption, we have $v=0$.

``$\Longleftarrow$": First, $0\in\bigcap\limits_{A\in S}\B(A,f_a)$.
Assume by contradiction that $0\neq B\in\bigcap\limits_{A\in S}\B(A,f_a)$,
then $\{0,B\}\subseteq\bigcap\limits_{A\in S}\B(A,f_a)$,
By Proposition 2.5, $0\neq V_{\{0,B\}}\subseteq\bigcap\limits_{A\in S}V_{\B(A,f_a)}$, which is a contradiction.

(2) Let $u\in(\Im f_a(A))^\perp$. Consider the matrix $H$ whose first row is $u^T$ and all other rows are zero, clearly $H\in\B(A,f_a)$,
hence $u\in V_{\B(A,f_a)}$, i.e., $(\Im f_a(A))^\perp\subseteq V_{\B(A,f_a)}$.
Let $u\in V_{\B(A,f_a)}$, similarly consider the matrix $H$ whose first row is $u^T$ and all other rows are zero.
Similar to the discussion in (1), we know $H\in\B(A,f_a)$, this shows $u\in(\Im f_a(A))^\perp$, i.e., $V_{\B(A,f_a)}\subseteq(\Im f_a(A))^\perp$.

In summary, $V_{\B(A,f_a)}=(\Im f_a(A))^\perp$.
\end{proof}

\begin{Theorem}

Let $\varnothing\neq S\subseteq \C$, then

(1) If $S$ is a core set, then for any $a\in F$ satisfying $x-a\mid\mu$,
we have $\sum\limits_{A\in S}\Im f_a(A) = F^n$.

(2) When $\mu$ has only linear irreducible factors, the converse of the above statement also holds.
\end{Theorem}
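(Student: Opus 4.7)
The plan is to combine the factor-wise core-set criterion of Proposition 3.4 with the orthogonal-complement identities of Lemma 4.2 and Proposition 2.3(3). Since $\varnothing\neq S\subseteq\C$, every matrix in $S$ has minimal polynomial $\mu$, so $S$ is polynomial finite with $\mu_S=\mu$ and $S_p=S$ for every prime factor $p$ of $\mu$. The ``in particular'' clause of Proposition 3.4 then says that $S$ is a core set if and only if $\bigcap_{A\in S}\B(A,\mu/p)=\{0\}$ for every prime factor $p$ of $\mu$.

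For part (1), I would assume $S$ is a core set and pick any $a\in F$ with $x-a\mid\mu$. Taking $p=x-a$ in the criterion above, and noting $\mu/(x-a)=f_a$, gives $\bigcap_{A\in S}\B(A,f_a)=\{0\}$. Lemma 4.2(1) converts this to $\bigcap_{A\in S}V_{\B(A,f_a)}=0$, and Lemma 4.2(2) rewrites it as $\bigcap_{A\in S}(\Im f_a(A))^\perp=0$. Taking orthogonal complements and applying Proposition 2.3(3) to swap intersection of complements with a sum of the original subspaces, I obtain $\sum_{A\in S}\Im f_a(A)=0^\perp=F^n$.

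For part (2), the hypothesis that $\mu$ has only linear irreducible factors means every prime factor of $\mu$ is of the form $x-a$ with $a\in F$, hence $\mu/p=f_a$ in every case. I would then simply reverse the chain from part (1): the assumption $\sum_{A\in S}\Im f_a(A)=F^n$ for every such $a$ yields, via Proposition 2.3(3), $\bigcap_{A\in S}(\Im f_a(A))^\perp=0$; via Lemma 4.2 this gives $\bigcap_{A\in S}\B(A,f_a)=\{0\}$; and since the $f_a$ now exhaust all $\mu/p$ for prime factors $p$ of $\mu$, Proposition 3.4 finally yields that $S$ is a core set.

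The only conceptual obstacle is isolating why the linearity hypothesis is essential in part (2). Proposition 3.4 requires the intersection condition for \emph{every} prime factor of $\mu$, but Lemma 4.2(2)'s identification $V_{\B(A,f_a)}=(\Im f_a(A))^\perp$ relies on $\B(A,f_a)$ consisting of degree-$0$ polynomials, which uses that $\mu/(x-a)=f_a$ and $\deg f_a=\deg\mu-1$. For a prime factor $p$ of degree at least $2$, nonzero elements of $\B(A,\mu/p)$ can have positive degree and are no longer literal matrices in $M_n(F)$, so the elementary row-space argument collapses. This is precisely the obstruction the subsequent sections address by passing to the splitting field $K$ of $p$ and exploiting the Galois action, and it is correctly left outside the scope of this theorem.
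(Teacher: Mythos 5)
Your proposal is correct and follows essentially the same route as the paper: apply Proposition 3.4 with $p=x-a$ to reduce the core-set property to $\bigcap_{A\in S}\B(A,f_a)=\{0\}$, then use Lemma 4.2 together with the orthogonal-complement identities to translate this into $\sum_{A\in S}\Im f_a(A)=F^n$, with the converse holding when all prime factors of $\mu$ are linear. Your closing remark on why the linearity hypothesis is needed (nonzero elements of $\B(A,\mu/p)$ having positive degree for $\deg p\geq 2$) is a correct reading of the obstruction the paper later resolves via the splitting field.
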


\begin{proof}

By Proposition 3.4, if $S$ is a core set, then for any $a\in F$ satisfying $x-a\mid\mu$, we have $\bigcap\limits_{A\in S}\B(A,f_a)=\{0\}$,
When $\mu_\C$ has only linear irreducible factors, the converse also holds.
By Lemma 4.2, 
$$\bigcap_{A\in S}\B(A,f_a)=\{0\}\iff\sum_{A\in S}\Im f_a(A)=\left(\bigcap_{A\in S}V_{\B(A,f_a)}\right)^\perp=0^\perp=F^n.$$

The theorem is proved.
\end{proof}

To further analyze core sets in $\C$, it is necessary to classify the matrices in $\C$ more finely according to $\Im f_a(A)$.

\begin{Definition}

Consider a similarity $\C\subseteq M_n(F)$ and fix $a\in F$, where $x-a|\mu$. Define a equivalence relation: for $A,B\in\C$, if $\Im f_a(A)=\Im f_a(B)$, then $A,B$ are said to be equivalent. Denote the equivalence class of $A\in\C$ under this equivalence relation as $[A]$.
\end{Definition}

\begin{Lemma}

Let $\C\subseteq M_n(F)$ be a similarity, then

(1) $\forall A,B\in\C, \dim\Im f_a(A)=\dim\Im f_a(B)$.

(2) $\forall A,B\in\C, |[A]|=|[B]|$.

(3) Let $A\in\C$, 
$V$ be a subspace of $F^n$ satisfying $\dim V=\dim\Im f_a(A)$,
then $\exists B\in\C\st V=\Im f_a(B)$.
\end{Lemma}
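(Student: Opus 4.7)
The plan is to exploit the fact that the similarity class $\C$ is a single orbit under the $\GL(n,F)$-conjugation action, and that this action interacts cleanly with polynomial evaluation and image spaces. The one computation I will use throughout is: for any $P\in\GL(n,F)$ and $A\in M_n(F)$, the polynomial $f_a$ satisfies $f_a(PAP^{-1})=Pf_a(A)P^{-1}$, so that $\Im f_a(PAP^{-1})=P\cdot\Im f_a(A)$. This identity is the single engine driving all three parts.

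For (1), I note that $A,B\in\C$ means there is some $P\in\GL(n,F)$ with $B=PAP^{-1}$, and then $\Im f_a(B)=P\cdot\Im f_a(A)$. Since $P$ is invertible, the restriction of $P$ to $\Im f_a(A)$ is a linear isomorphism onto $\Im f_a(B)$, so the two subspaces have the same dimension.

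For (2), I pick $A,B\in\C$ and fix $P\in\GL(n,F)$ with $B=PAP^{-1}$. I would define the map $\varphi\colon[A]\to\C$ by $\varphi(A')=PA'P^{-1}$. If $A'\in[A]$, then $\Im f_a(A')=\Im f_a(A)$, so $\Im f_a(\varphi(A'))=P\cdot\Im f_a(A')=P\cdot\Im f_a(A)=\Im f_a(B)$, placing $\varphi(A')$ in $[B]$. The same argument with $P^{-1}$ in place of $P$ yields an inverse map $[B]\to[A]$, so $\varphi$ restricts to a bijection between $[A]$ and $[B]$, giving $|[A]|=|[B]|$.

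For (3), given $A\in\C$ and a subspace $V\subseteq F^n$ with $\dim V=\dim\Im f_a(A)$, I use the well-known transitivity of $\GL(n,F)$ on subspaces of a given dimension: there exists $P\in\GL(n,F)$ such that $P\cdot\Im f_a(A)=V$. Setting $B=PAP^{-1}$ gives $B\in\C$ and $\Im f_a(B)=P\cdot\Im f_a(A)=V$, as required. There is no genuinely hard step here; the mild subtlety is just to record that polynomial evaluation is $\GL$-equivariant under conjugation (so that $f_a$ can be moved past $P$), and the rest is orbit-stabilizer bookkeeping plus transitivity on Grassmannians.
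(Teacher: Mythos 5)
Your proposal is correct and follows essentially the same route as the paper: the conjugation identity $\Im f_a(QAQ^{-1})=Q\cdot\Im f_a(A)$ drives parts (1) and (2) via the map $U\mapsto QUQ^{-1}$ on equivalence classes, and part (3) via transitivity of $\GL(n,F)$ on subspaces of fixed dimension, exactly as in the paper's argument. The only cosmetic difference is that you exhibit an explicit inverse map for (2) where the paper argues injectivity plus symmetry to get $|[A]|\leq|[B]|$ and conversely.
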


\begin{proof}

(1) This is a fact from linear algebra.

(2) First, $|[A]|,|[B]|<\infty$. Let $Q\in\GL(n,F)$ satisfy $B=QAQ^{-1}$, denote $\sigma:x\mapsto Qx$.
Consider the mapping $\phi:[A]\to\C, U\to QUQ^{-1}$, clearly $\phi$ is injective.
Note that $\Im f_a(QUQ^{-1})=\sigma(\Im f_a(U))=\sigma(\Im f_a(A))=\Im f_a(B)$. Hence, $QUQ^{-1}\in[B]$.
Thus $|[A]|\leq|[B]|$, similarly $|[B]|\leq|[A]|$, so $|[A]|=|[B]|$.

(3) Since $\dim V=\dim\Im f_a(A)$, there exists a linear automorphism $\sigma$ on $F^n$ satisfying $\sigma(\Im f_a(A))=V$.
Let $Q\in\GL(n,F)$ satisfy $\sigma(x)=Qx$, then $V=\sigma(\Im f_a(A))=\Im f_a(QAQ^{-1})$,
Let $B=QAQ^{-1}$, then $V=\Im f_a(B)$.
\end{proof}

\begin{Convention}
For $A\in\C$, denote $r_a=\dim\Im f_a(A)$, abbreviated as $r$. The validation of this notation is guaranteed by Lemma 4.5(1).
\end{Convention}

Based on the above proposition, the enumeration of equivalence classes can be further characterized as follows:

\begin{Theorem}

 In this case, the equivalence relation in Definition 4.4 has $\prod\limits^r_{i=1}\dfrac{q^n-q^{i-1}}{q^r-q^{i-1}}$ equivalence classes, each of size
$|\C|\prod\limits^r_{i=1}\dfrac{q^r-q^{i-1}}{q^n-q^{i-1}}$.

\end{Theorem}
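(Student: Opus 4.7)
The plan is to reduce the counting to two standard facts: that equivalence classes of the relation in Definition 4.4 are in bijection with $r$-dimensional subspaces of $F^n$, and that such subspaces are counted by the Gaussian binomial coefficient $\binom{n}{r}_q$.

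First I would set up the bijection between equivalence classes and $r$-dimensional subspaces of $F^n$. The map sends an equivalence class $[A]$ to the subspace $\Im f_a(A)$, which is well-defined by the very definition of the relation. Lemma 4.5(1) guarantees that every such image has dimension $r$, so the map lands in the Grassmannian of $r$-dimensional subspaces of $F^n$. Injectivity is immediate from the definition of the equivalence relation, and surjectivity is exactly Lemma 4.5(3). Therefore the number of equivalence classes equals the number of $r$-dimensional subspaces of $F^n$, which is the Gaussian binomial coefficient
\[
\binom{n}{r}_q = \prod_{i=1}^{r} \frac{q^n - q^{i-1}}{q^r - q^{i-1}}.
\]
This yields the first formula in the statement.

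For the size of each equivalence class, I would invoke Lemma 4.5(2), which says all equivalence classes have the same cardinality. Since the classes partition the finite set $\C$, the common size is
\[
\frac{|\C|}{\binom{n}{r}_q} = |\C|\prod_{i=1}^{r}\frac{q^r - q^{i-1}}{q^n - q^{i-1}},
\]
which is exactly the second formula.

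There is no real obstacle here: the substantive work was already done in Lemma 4.5. The only thing to be a little careful about is the Gaussian binomial formula itself; one standard derivation is to count ordered bases of an $r$-dimensional subspace inside $F^n$ (giving $\prod_{i=1}^{r}(q^n - q^{i-1})$) and divide by the number of ordered bases of a fixed $r$-dimensional space (giving $\prod_{i=1}^{r}(q^r - q^{i-1})$), so I would cite this fact rather than reprove it.
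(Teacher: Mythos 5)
Your proposal is correct and follows essentially the same route as the paper, which simply states that the theorem is a direct consequence of Lemma 4.5; you have filled in the implicit details (the bijection with the Grassmannian via $[A]\mapsto\Im f_a(A)$, surjectivity from Lemma 4.5(3), and equal class sizes from Lemma 4.5(2)) correctly.
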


\begin{proof}

This is a direct consequence of Lemma 4.5.

\end{proof}

\section{Splitting of Higher-Degree Factors}

We now proceed to handle the case of higher-degree factors. In this and the next section, we adopt the following convention:

\begin{Convention}

Let $\varnothing\neq S\subset\C, |F|=q$ be prime, $\deg\mu=t$.
Assume that $m(x)=x^s+a_1x^{s-1}+\cdots+a_{s-1}x+a_s\in F[x]$
is a monic irreducible polynomial of degree $s\geq 2$ satisfying $m|\mu$.
In this case, $m$ is a separable polynomial.
Let $K$ be the splitting field of $m$ over $F$, and let the $s$ roots of $m$ in $K$ be $\alpha_1,\alpha_2,\cdots,\alpha_s$.
For notational convenience, let $\alpha=\alpha_1$.
\end{Convention}

We only discuss the case where $F$ is a finite field of prime order $q$ here, because in this case $F$ is a perfect field, and all irreducible polynomials over it are separable.

We know that $K$ is an $s$-dimensional $F$-linear space and $\{1,\alpha,\cdots,\alpha^{s-1}\}$ is a basis of $K$, so for $l \in K$ we have the representation
$$l=\sum_{i=0}^{s-1}b_i \alpha^i,b_i \in F,0\leq i\leq s-1,$$
which leads to the following lemma.

\begin{Lemma}

Let $g\in F[x], g|\mu$, then
$$\bigcap_{A\in S}\B_F(A,g)=\{0\}\iff\bigcap_{A\in S}\B_K(A,g)=\{0\}.$$
\end{Lemma}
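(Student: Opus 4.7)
The plan is to establish the two implications separately. The reverse direction follows from an inclusion, while the forward direction requires decomposing a polynomial over the $F$-basis $\{1,\alpha,\ldots,\alpha^{s-1}\}$ of $K$.

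For the reverse direction, I would first verify that the minimal polynomial $m_A$ is unchanged under the extension $F\subseteq K$: clearly $m_A^F(A)=0$ in $M_n(K)[x]$, so $m_A^K\mid m_A^F$, and conversely, writing any $p\in K[x]$ with $p(A)=0$ as $p=\sum_{i=0}^{s-1}p_i\alpha^i$ with $p_i\in F[x]$ yields $\sum_i p_i(A)\alpha^i=0$; since the $p_i(A)\in M_n(F)$ and $\{1,\alpha,\ldots,\alpha^{s-1}\}$ is $F$-linearly independent in $K$, each $p_i(A)=0$, so $m_A^F\mid p_i$ for each $i$ and hence $m_A^F\mid p$. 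Thus $m_A^K=m_A^F$. The degree bound in the definition of $\B(A,g)$ is therefore the same over $F$ and over $K$, and the natural inclusion $M_n(F)[x]\hookrightarrow M_n(K)[x]$ gives $\B_F(A,g)\subseteq \B_K(A,g)$. Taking intersections finishes this direction.

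For the forward direction, take $f\in\bigcap_{A\in S}\B_K(A,g)$. Each matrix coefficient of $f$ lies in $M_n(K)$, and decomposing each entry in the $F$-basis of $K$ and collecting by powers of $\alpha$ produces a unique representation $f=\sum_{i=0}^{s-1}f_i\alpha^i$ with $f_i\in M_n(F)[x]$ and $\deg f_i\leq \deg f<\deg m_A-\deg g$. Since the scalar $\alpha\in K$ commutes with every coefficient of $g\in F[x]$ and with the matrix $A\in M_n(F)\subseteq M_n(K)$, the right-evaluation rule yields $(fg)(A)=\sum_{i=0}^{s-1}(f_ig)(A)\alpha^i$. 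By hypothesis this equals $0$, and since $(f_ig)(A)\in M_n(F)$ for each $i$, the $F$-linear independence of $\{1,\alpha,\ldots,\alpha^{s-1}\}$ forces $(f_ig)(A)=0$. Thus each $f_i$ lies in $\B_F(A,g)$ for every $A\in S$, so $f_i\in\bigcap_{A\in S}\B_F(A,g)=\{0\}$, whence $f=0$.

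The main obstacle is the bookkeeping step that justifies $(fg)(A)=\sum_i(f_ig)(A)\alpha^i$ under the noncommutative right-evaluation convention established in the introduction. Once one observes that $\alpha^i$ lies in the center of $M_n(K)$ and the indeterminate $x$ is central, the scalars $\alpha^i$ pull through every product and evaluation by direct expansion, and the identity is straightforward. After this, the argument reduces to the standard linear-algebraic fact that a relation $\sum c_i\alpha^i=0$ with $c_i$ in an $F$-module and $\{1,\alpha,\ldots,\alpha^{s-1}\}$ an $F$-basis of $K$ forces each $c_i=0$.
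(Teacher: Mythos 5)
Your proof is correct and follows essentially the same route as the paper: the reverse direction via the inclusion $\B_F(A,g)\subseteq\B_K(A,g)$, and the forward direction by decomposing $f=\sum_{i=0}^{s-1}f_i\alpha^i$ over the $F$-basis $\{1,\alpha,\ldots,\alpha^{s-1}\}$ and using $F$-linear independence to force each $(f_ig)(A)=0$. Your extra check that $m_A$ is unchanged under the extension (so the degree bounds in $\B_F$ and $\B_K$ agree) is a detail the paper leaves implicit, but it is not a different argument.
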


\begin{proof}

We only need to prove ``$\Longrightarrow$".
Let $h(x)\in\bigcap\limits_{A\in S}\B_K(A,g)\subseteq M_n(K)[x]$, then $\forall A\in S,(hg)(A)=0$.
This induces $h(x)=\sum\limits_{i=0}^{s-1}\alpha^i h_i(x)$, where $h_i(x)\in M_n(F)[x],0\leq i\leq s-1$.
Note that $\deg h_i\leq\deg h<\deg\mu-\deg g$.
Then $0=(hg)(A)=\left(\left(\sum\limits_{i=0}^{s-1}\alpha^i h_i\right)g\right)(A)=\sum\limits_{i=0}^{s-1}\alpha^i (h_i g)(A)$,
hence $\forall 0\leq i\leq s-1,h_i\in\B_F(A,g)$, so $h_i=0$, i.e., $h=0$.
\end{proof}

We perform further transformation on the necessary and sufficient condition for core set determination:

\begin{Theorem}

$\bigcap\limits_{A\in S}\B_F(A,\dfrac{\mu}{m})=\{0\}\iff\forall 1\leq i\leq s,
\bigcap\limits_{A\in S}\B_K(A,f_{\alpha_i})=\{0\}.$
\end{Theorem}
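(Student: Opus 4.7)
My plan is to apply Proposition~3.3 to convert both sides of the claimed equivalence into divisibility statements about the zero ideal, and then to bridge $N(S)$ taken over $F$ and over $K$ using the $F$-basis $\{1,\alpha,\ldots,\alpha^{s-1}\}$ of $K$. Write $N_F(S)$, $N_K(S)$ for the two zero ideals. Let $e$ denote the multiplicity of $m$ in $\mu$; since the $\alpha_i$ are Galois conjugate, each $(x-\alpha_i)$ appears in $\mu$ over $K$ with the same multiplicity $e$. Because every $A\in S$ satisfies $m_A=\mu$ and $m^e\mid\mu$, the set $S_p$ in Proposition~3.3 always equals $S$ for the prime factors considered below.

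Applying Proposition~3.3 over $F$ to the prime factor $p=m$ of $\mu$ with multiplicity $e$, the equivalence $(1)\iff(3)$ there recasts the left-hand side of Theorem~5.3 as the divisibility statement $\forall f\in N_F(S),\ m^e\mid f$. Over $K$, for each linear prime factor $(x-\alpha_i)$ of $\mu$ (still of multiplicity $e$), Proposition~3.3 applied in $M_n(K)[x]$ yields
\[
\bigcap_{A\in S}\B_K(A,f_{\alpha_i})=\{0\}\iff\forall f\in N_K(S),\ (x-\alpha_i)^e\mid f.
\]
Since the $(x-\alpha_i)^e$ are pairwise coprime in $K[x]$ and $K[x]$ is central in $M_n(K)[x]$, a B\'ezout argument shows that simultaneous divisibility by all of them is equivalent to divisibility by their product $m^e$. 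Hence the right-hand side of the theorem is equivalent to $\forall f\in N_K(S),\ m^e\mid f$.

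The theorem thereby reduces to the bridge
\[
\forall f\in N_F(S),\ m^e\mid f\iff\forall f\in N_K(S),\ m^e\mid f.
\]
The direction $\Leftarrow$ is immediate from $N_F(S)\subseteq N_K(S)$. For $\Rightarrow$, take any $f\in N_K(S)$ and expand $f=\sum_{i=0}^{s-1}\alpha^i f_i$ with $f_i\in M_n(F)[x]$, using the $F$-basis of $K$. For each $A\in S\subseteq M_n(F)$ the matrices $f_i(A)$ lie in $M_n(F)$, and the relation $0=f(A)=\sum_i\alpha^i f_i(A)$ together with the $F$-linear independence of $\{1,\alpha,\ldots,\alpha^{s-1}\}$, applied entrywise, forces $f_i(A)=0$ for every $i$. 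Hence each $f_i\in N_F(S)$, so the hypothesis gives $m^e\mid f_i$ for all $i$, and therefore $m^e\mid f$.

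The step most worth verifying carefully is the B\'ezout-style CRT assertion in the noncommutative ring $M_n(K)[x]$, namely that simultaneous divisibility by the pairwise coprime central polynomials $(x-\alpha_i)^e$ forces divisibility by $m^e=\prod_i(x-\alpha_i)^e$. This follows from B\'ezout inside the central subring $K[x]$, but should be checked to respect the right-evaluation convention imposed throughout the paper. A secondary point to confirm is that the minimal polynomial of each $A\in S$ over $K$ is still $\mu$, so that Proposition~3.3 may legitimately be invoked over $K$ with the same multiplicity data $e$.
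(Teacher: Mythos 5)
Your proof is correct, but it is organized quite differently from the paper's. The paper first proves a base-change lemma (its Lemma 5.2) showing $\bigcap_{A\in S}\B_F(A,g)=\{0\}\iff\bigcap_{A\in S}\B_K(A,g)=\{0\}$ via the expansion $h=\sum_j\alpha^j h_j$, and then argues directly with the bounded-degree sets: for ``$\Leftarrow$'' of the restated claim it multiplies a nonzero $B\in\bigcap\B_K(A,f_{\alpha_i})$ by $\tfrac{m}{x-\alpha_i}$ to land in $\bigcap\B_K(A,\tfrac{\mu}{m})$, and for ``$\Rightarrow$'' it takes $h$ of degree $<s$ in $\bigcap\B_K(A,\tfrac{\mu}{m})$, shows $h(\alpha_i)\in\bigcap\B_K(A,f_{\alpha_i})$ via $(x-\alpha_i)\mid h(x)-h(\alpha_i)$, and concludes $h=0$ because a degree-$<s$ polynomial vanishing at $s$ distinct points is zero. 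You instead route everything through the null ideal: Proposition 3.3 converts both sides into divisibility statements ($m^e\mid f$ for $f\in N_F(S)$, resp.\ $(x-\alpha_i)^e\mid f$ for $f\in N_K(S)$), an entrywise CRT/B\'ezout argument in the central subring $K[x]$ glues the linear factors into $m^e$, and your bridge between $N_F(S)$ and $N_K(S)$ uses the same basis-expansion trick as the paper's Lemma 5.2, just applied to $N(S)$ rather than to $\B$. The underlying ideas are cousins (the paper's ``$s$ roots of a degree-$<s$ polynomial'' step is the interpolation face of your CRT step), but your version has the virtue of reusing the already-established machinery of Proposition 3.3 verbatim, at the cost of being somewhat more indirect; the two points you flag for verification (centrality of $K[x]$ making the B\'ezout identity compatible with right evaluation, and invariance of $m_A$ and of the multiplicity $e$ under the extension $K/F$) are both routine and hold as you expect.
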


\begin{proof}

By Lemma 5.2, it suffices to prove
$$\bigcap_{A\in S}\B_K(A,\dfrac{\mu}{m})=\{0\}\iff\forall 1\leq i\leq s,
\bigcap_{A\in S}\B_K(A,f_{\alpha_i})=\{0\}.$$

``$\Longrightarrow$": For any $1\leq i\leq s$,
assume that $0\neq B\in\bigcap\limits_{A\in S}\B_K(A,f_{\alpha_i})$,
then $0\neq \dfrac{Bm}{x-\alpha_i}\in\bigcap\limits_{A\in S}\B_K(A,\dfrac{\mu}{m})$, which is a contradiction.
Hence $\forall 1\leq i\leq s,\bigcap\limits_{A\in S}\B_K(A,f_{\alpha_i})=\{0\}$.

``$\Longleftarrow$": Let $h(x)=\sum\limits_{k=0}^{s-1}B_kx^k\in\bigcap\limits_{A\in S}\B_K(A,\dfrac{\mu}{m})$, i.e., $\forall A\in S,h(A)\dfrac{\mu}{m}(A)=0$.
For any $1\leq i\leq s$, from $\forall A\in S,\mu(A)=0$ and $x-\alpha_i|h(x)-h(\alpha_i)$ we have
$$0=h(A)f_{\alpha_i}(A)
=h(\alpha_i)f_{\alpha_i}(A),$$
hence $h(\alpha_i)\in\bigcap\limits_{A\in S}\B_K(A,f_{\alpha_i})$,
i.e., $h(\alpha_i)=0$.
But $\deg h<s$, so $h=0$, thus $\bigcap\limits_{A\in S}\B_K(A,\dfrac{\mu}{m})=\{0\}$. The theorem is proved.
\end{proof}

Further analysis of the properties of the splitting field yields the following lemma. With this lemma, we can further process Theorem 5.3.
\begin{Lemma}

$\forall 1\leq i,j\leq s,\bigcap\limits_{A\in S}\B_K(A,f_{\alpha_i})=\{0\}
\iff\bigcap\limits_{A\in S}\B_K(A,f_{\alpha_j})=\{0\}$.
\end{Lemma}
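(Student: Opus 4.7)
The plan is to exploit the Galois action of $\Gal(K/F)$ on the roots of $m$. Since $F$ is a finite (hence perfect) field of prime order and $m$ is irreducible over $F$, the polynomial $m$ is separable and $\Gal(K/F)$ acts transitively on the root set $\{\alpha_1, \ldots, \alpha_s\}$. So for any pair $i,j$ I can pick $\sigma \in \Gal(K/F)$ with $\sigma(\alpha_i) = \alpha_j$, and my goal is to convert $\sigma$ into a bijection of the two intersections.

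First I would extend $\sigma$ to act on $M_n(K)$ entrywise and on $M_n(K)[x]$ coefficient-wise; both extensions are ring automorphisms, fix $M_n(F)$ and $F[x]$ pointwise, preserve the degree of a polynomial, and send $0$ to $0$. Because $\mu \in F[x]$ is fixed and $\sigma(x - \alpha_i) = x - \alpha_j$, we get $\sigma(f_{\alpha_i}) = f_{\alpha_j}$. For $A \in S \subseteq \C \subseteq M_n(F)$, $\sigma$ fixes $A$, so applying $\sigma$ entrywise to a product evaluated at $A$ yields
\[
\sigma\bigl(h(A) f_{\alpha_i}(A)\bigr) = \sigma(h)(A)\cdot f_{\alpha_j}(A),
\]
using that $f_{\alpha_i}$ has scalar (hence central) coefficients in $K$. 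Thus $h \in \B_K(A, f_{\alpha_i})$ (i.e.\ $h(A)f_{\alpha_i}(A) = 0$ and $\deg h < \deg\mu - \deg f_{\alpha_i} = 1$) implies $\sigma(h) \in \B_K(A, f_{\alpha_j})$, because the degree bound is symmetric in $i,j$ and the evaluation condition is preserved.

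Since $\sigma$ is invertible with inverse also in $\Gal(K/F)$, the same argument applied to $\sigma^{-1}$ (which sends $\alpha_j$ to $\alpha_i$) gives the reverse inclusion, so $\sigma$ restricts to a bijection
\[
\B_K(A, f_{\alpha_i}) \xrightarrow{\ \sim\ } \B_K(A, f_{\alpha_j}) \quad \text{for every } A \in S.
\]
Intersecting over $A \in S$, this yields a bijection
\[
\bigcap_{A \in S} \B_K(A, f_{\alpha_i}) \xrightarrow{\ \sim\ } \bigcap_{A \in S} \B_K(A, f_{\alpha_j})
\]
that sends $0$ to $0$. Consequently one intersection equals $\{0\}$ if and only if the other does, which is exactly the claimed equivalence.

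The only subtle point — and the one I would be most careful with — is verifying that the Galois action commutes with evaluation in the noncommutative polynomial ring $M_n(K)[x]$. This works cleanly here precisely because the argument $A$ lies in $M_n(F)$ (and is therefore $\sigma$-fixed) and because $f_{\alpha_i}$ has coefficients in the center $K \subseteq M_n(K)$, so the right-evaluation convention of Section 1 does not cause any reordering issues. Once that check is in place, transitivity of the Galois action on roots does all the remaining work.
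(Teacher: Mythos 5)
Your proposal is correct and follows the same route as the paper: pick $\sigma\in\Gal(K/F)$ with $\sigma(\alpha_i)=\alpha_j$ by transitivity and induce it to $M_n(K)[x]$ to get an isomorphism between the two intersections. The paper states this in one line; you have simply filled in the verification that the induced map fixes $A\in M_n(F)$, sends $f_{\alpha_i}$ to $f_{\alpha_j}$, and commutes with right evaluation, which is exactly the content behind the paper's phrase ``naturally induce.''
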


\begin{proof}

Since $m$ is irreducible, $\Gal(K/F)$ acts transitively on $\{\alpha_1,\cdots,\alpha_s\}$,
so $$\exists\sigma\in\Gal(K/F)\st\sigma(\alpha_i)=\alpha_j.$$
Naturally induce $\sigma$ to $M_n(K)[x]$, obtaining the isomorphism $\bigcap\limits_{A\in S}\B_K(A,f_{\alpha_i})\cong\bigcap\limits_{A\in S}\B_K(A,f_{\alpha_j})$.
The original proposition is proved.
\end{proof}

These results above ultimately lead to the following conclusion:
\begin{Theorem}

$\bigcap\limits_{A\in S}\B_F(A,\dfrac{f}{m})=\{0\}\iff
\bigcap\limits_{A\in S}\B_K(A,f_\alpha)=\{0\}\iff\sum\limits_{A\in S}\Im f_\alpha(A)=K^n.$
\end{Theorem}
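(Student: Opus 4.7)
The plan is to obtain Theorem 5.5 as a direct assembly of the results already established, with no new substantive computation. The statement is a chain of two equivalences, and each of them is essentially one step from the work in Sections 4 and 5.

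For the first equivalence, I would invoke Theorem 5.3, which asserts that $\bigcap_{A\in S}\B_F(A,\mu/m)=\{0\}$ is equivalent to $\bigcap_{A\in S}\B_K(A,f_{\alpha_i})=\{0\}$ holding for every root $\alpha_i$ of $m$ in $K$. Lemma 5.4, obtained from the transitivity of the $\Gal(K/F)$-action on the roots of $m$, then tells us that these conditions for different $i$ are all mutually equivalent, so the universal ``for all $i$'' collapses to the single statement for $\alpha=\alpha_1$, which is precisely the middle term of the chain.

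For the second equivalence I would work entirely over $K$ and import the linear-factor analysis of Section 4 across the base change. Since $S\subseteq\C$, every $A\in S$ has minimal polynomial $\mu$, and $x-\alpha$ is a linear divisor of $\mu$ in $K[x]$; thus the setup of Section 4, with $K$ replacing $F$ and $\alpha$ replacing $a$, applies verbatim. Lemma 4.2 over $K$ then yields $\bigcap_{A\in S}\B_K(A,f_\alpha)=\{0\}$ iff $\bigcap_{A\in S}V_{\B_K(A,f_\alpha)}=0$ iff $\bigcap_{A\in S}(\Im f_\alpha(A))^\perp=0$ in $K^n$. Taking orthogonal complements of the last identity and invoking Proposition 2.4(3) converts the intersection of perps into a sum of subspaces, delivering $\sum_{A\in S}\Im f_\alpha(A)=K^n$.

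There is no serious obstacle here; the one point that deserves explicit mention is that Lemma 4.2 and Proposition 2.4(3) transfer unchanged from $F$ to $K$, since their proofs are purely formal in the base field and nothing in them exploited finiteness or any feature special to $F$. Concatenating the two equivalences obtained above immediately gives the theorem.
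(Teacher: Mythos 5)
Your proposal is correct and follows exactly the route the paper intends: the paper states Theorem 5.5 without a written proof, as an immediate assembly of Theorem 5.3 together with Lemma 5.4 (collapsing the ``for all roots $\alpha_i$'' to the single root $\alpha$) for the first equivalence, and the Lemma 4.2/Theorem 4.3 orthogonal-complement argument transported to the base field $K$ for the second. Your explicit remark that Lemma 4.2 and Proposition 2.4(3) transfer verbatim to $K$ is the only point needing care, and you handle it correctly.
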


Through the content of this section, we have transformed higher-degree polynomials into linear polynomials after field extension. In the next section, we will discuss the transformed proposition.

\section{Equivalence Class Partition in the Splitting Field}

Similar to the case of linear factors, we continue to study $\Im f_\alpha(A)$ and the equivalence classes defined in Definition 4.4. We restate the basic results in the extension field $K$. Here we first emphasize that Lemma 4.5(3) no longer holds in this case.

\begin{Lemma}

Let $\C\subseteq M_n(F)$ be a similarity class, then

(1) $\forall A,B\in\C,\dim\Im f_\alpha(A)=\dim\Im f_\alpha(B)$.

(2) $\forall A,B\in\C,|[A]|=|[B]|$.
\end{Lemma}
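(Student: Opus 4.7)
The plan is to adapt the argument for Lemma 4.5(1) and (2) to the extension-field setting, relying on the key observation that even though the image spaces now live in $K^n$, the similarity between any two matrices in $\C$ is witnessed by a matrix $Q\in\GL(n,F)\subseteq\GL(n,K)$. Concretely, given $A,B\in\C$ write $B=QAQ^{-1}$ with $Q\in\GL(n,F)$. Since polynomial evaluation commutes with conjugation, $f_\alpha(B)=Qf_\alpha(A)Q^{-1}$, and therefore $\Im f_\alpha(B)=Q\cdot\Im f_\alpha(A)$ as subspaces of $K^n$. Because left multiplication by $Q$ is a $K$-linear automorphism of $K^n$, part (1) follows at once: $\dim\Im f_\alpha(A)=\dim\Im f_\alpha(B)$.

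For part (2), I would mimic the map used in Lemma 4.5(2): define $\phi\colon[A]\to\C$ by $\phi(U)=QUQ^{-1}$, which is injective since conjugation by $Q$ is a bijection on $M_n(F)$. To check that $\phi([A])\subseteq[B]$, take $U\in[A]$, so that $\Im f_\alpha(U)=\Im f_\alpha(A)$; then the same identity gives $\Im f_\alpha(QUQ^{-1})=Q\cdot\Im f_\alpha(U)=Q\cdot\Im f_\alpha(A)=\Im f_\alpha(B)$, hence $QUQ^{-1}\in[B]$. This yields $|[A]|\leq|[B]|$, and reversing the roles of $A$ and $B$ gives the reverse inequality, so $|[A]|=|[B]|$.

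The only step that might look subtle is the compatibility of the identity $f_\alpha(QAQ^{-1})=Qf_\alpha(A)Q^{-1}$ when the polynomial $f_\alpha$ has coefficients in $K$ while $Q,A$ have entries only in $F$. This is not actually an obstacle: under $F\subseteq K$, the matrix $Q$ sits inside $\GL(n,K)$, and the conjugation identity holds over any commutative ring containing the coefficients of $f_\alpha$. This is precisely the place where Lemma 4.5(3) breaks down: inside $K^n$, the subspace $\Im f_\alpha(A)$ inherits Galois-theoretic constraints from $A\in M_n(F)$ (its translate by any $\sigma\in\Gal(K/F)$ must equal $\Im f_{\sigma(\alpha)}(A)$), so not every $K$-subspace of the correct dimension is realized as $\Im f_\alpha(B)$ for some $B\in\C$, and the analogue of Lemma 4.5(3) cannot be expected. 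Thus the real conceptual work in this section lies not in proving the lemma itself but in keeping track of which parts of Lemma 4.5 still transfer to the extension.
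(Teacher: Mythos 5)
Your proof is correct and follows exactly the route the paper intends: the paper's proof of Lemma 6.1 simply says ``Similar to Lemma 4.5(1)(2),'' and your argument is precisely that adaptation, with the one genuinely relevant point (that the similarity is witnessed by $Q\in\GL(n,F)\subseteq\GL(n,K)$, so conjugation still transports $\Im f_\alpha$ inside $K^n$) made explicit. Your closing remark on why Lemma 4.5(3) fails in the extension-field setting matches the paper's own emphasis and is a useful addition, but no gap exists.
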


\begin{proof}

Similar to Lemma 4.5(1)(2).
\end{proof}

Since Lemma 4.5(3) no longer holds, there are additional issues. To further study this situation, this paper will use linear algebra methods to establish a deeper characterization of $\Im f_\alpha(A)$.

\begin{Theorem}
(1) Let $v^j_i,1\leq i\leq r,0\leq j\leq s-1$ be $rs$ linearly independent vectors in $F^n$. Let $v_i=\sum\limits^{s-1}_{j=0}\alpha^jv^j_i$, then $\exists A\in\C\st\Im f_\alpha(A)=\span\{v_1,\cdots,v_r\}$. In this case, $v_1,\cdots,v_r$ naturally form a basis.

(2) Let $V\in\C,\Im f_\alpha(A)$ have a basis $v_1,\cdots,v_r$. Let $v_i=\sum\limits^{s-1}_{j=0}\alpha^jv^j_i$, where $v^j_i\in F^n,1\leq i\leq r,0\leq j\leq s-1$, then the $v^j_i$ are linearly independent.
\end{Theorem}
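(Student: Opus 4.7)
The plan is to establish (2) first, and to deduce (1) from it by a conjugation argument. For (2), the key idea is to exploit the Galois action of $G=\Gal(K/F)=\{\sigma_1,\ldots,\sigma_s\}$ (labelled so that $\sigma_k(\alpha)=\alpha_k$) coordinatewise on $K^n$. Since each $v_i^j\in F^n$ is fixed by $G$, applying $\sigma_k$ to $v_i=\sum_{j=0}^{s-1}\alpha^j v_i^j$ yields $\sigma_k(v_i)=\sum_{j=0}^{s-1}\alpha_k^j v_i^j$. Because $A\in M_n(F)$ is $G$-fixed while the coefficients of $f_\alpha\in K[x]$ are Galois-conjugated to those of $f_{\alpha_k}$, we obtain $\sigma_k(\Im f_\alpha(A))=\Im f_{\alpha_k}(A)$; thus $\sigma_k$ carries the $K$-basis $\{v_1,\ldots,v_r\}$ of $\Im f_\alpha(A)$ onto a $K$-basis $\{\sigma_k(v_1),\ldots,\sigma_k(v_r)\}$ of $\Im f_{\alpha_k}(A)$.

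Next I would show that the sum $\sum_{k=1}^{s}\Im f_{\alpha_k}(A)$ is direct and of $K$-dimension $rs$. Letting $c$ denote the multiplicity of $m$ in $\mu$, the identity $f_{\alpha_k}(x)(x-\alpha_k)^c=(x-\alpha_k)^{c-1}\mu(x)$ together with $\mu(A)=0$ gives $\Im f_{\alpha_k}(A)\subseteq\ker((A-\alpha_k I)^c)$, and over $K$ the generalized eigenspaces at distinct $\alpha_k$ lie in direct sum by the primary decomposition of $K^n$. Hence the $rs$ vectors $\{\sigma_k(v_i)\}_{1\le i\le r,\,1\le k\le s}$ are $K$-linearly independent in $K^n$. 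For each fixed $i$, the column $(\sigma_k(v_i))_{k=1}^{s}$ equals the Vandermonde matrix $(\alpha_k^j)_{k,j}$ applied to $(v_i^j)_{j=0}^{s-1}$, and this Vandermonde matrix is invertible over $K$ because the $\alpha_k$ are distinct. Consequently $\{v_i^j\}_{i,j}$ is also $K$-linearly independent in $K^n$; since $F\subseteq K$, it is $F$-linearly independent in $F^n$, which proves (2).

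For (1), take any $B\in\C$ and a $K$-basis $u_1,\ldots,u_r$ of $\Im f_\alpha(B)$, and decompose $u_i=\sum_j\alpha^j u_i^j$ with $u_i^j\in F^n$. By (2) applied to $B$, the $rs$ vectors $u_i^j$ are $F$-linearly independent. Choose $Q\in\GL(n,F)$ with $Qu_i^j=v_i^j$ for all $i,j$ (extend both families to bases of $F^n$ if $rs<n$). Setting $A=QBQ^{-1}\in\C$, the identity $f_\alpha(A)=Qf_\alpha(B)Q^{-1}$ yields $\Im f_\alpha(A)=Q\cdot\Im f_\alpha(B)$, and $Qu_i=\sum_j\alpha^j Qu_i^j=\sum_j\alpha^j v_i^j=v_i$, so $\Im f_\alpha(A)=\span_K\{v_1,\ldots,v_r\}$; the basis assertion is immediate because $Q$ is invertible.

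The main obstacle I expect is the direct-sum step together with its dimension count $\dim_K\bigoplus_{k=1}^{s}\Im f_{\alpha_k}(A)=rs$, and in particular the containment $\Im f_{\alpha_k}(A)\subseteq\ker((A-\alpha_k I)^c)$ in the high-multiplicity case $c>1$. Once this is in hand, the Galois-equivariance of the construction and the invertibility of the Vandermonde matrix reduce the remainder of the argument to routine bookkeeping.
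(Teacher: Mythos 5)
Your proposal is correct, but it takes a genuinely different route from the paper's. The paper proves the result by passing to the generalized Jordan canonical form $J_0$ of $\C$: for a block $J$ attached to the elementary divisor $m^c$ it reduces $\Im f_\alpha(J)$ to the $\alpha$-eigenspace, writes an eigenvector as $u=\sum_j\alpha^j u^j$, and argues that $\span_F\{u^0,\dots,u^{s-1}\}$ is a nonzero $J$-invariant subspace, hence of dimension divisible by $s$ and therefore exactly $s$; the statements for a general $A\in\C$ are then obtained by conjugation. You instead prove (2) directly for an arbitrary $A\in\C$ by Galois descent: $\sigma_k(\Im f_\alpha(A))=\Im f_{\alpha_k}(A)$, these images lie in eigenspaces of $A$ for the distinct eigenvalues $\alpha_k$ and hence sum directly to $K$-dimension $rs$, and the invertible Vandermonde matrix $(\alpha_k^j)$ identifies $\span_K\{\sigma_k(v_i)\}_{i,k}$ with $\span_K\{v_i^j\}_{i,j}$, forcing the $rs$ vectors $v_i^j$ to be independent; (1) then follows by the same conjugation device the paper uses. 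Both arguments are sound. Yours avoids the canonical form entirely and is arguably cleaner; note moreover that the step you flag as the main obstacle is immediate in the stronger form $\Im f_{\alpha_k}(A)\subseteq\ker(A-\alpha_k I)$, since $(x-\alpha_k)f_{\alpha_k}=\mu$ and $\mu(A)=0$, so ordinary eigenspaces (whose direct sum across distinct eigenvalues is elementary) already suffice and no primary decomposition or multiplicity bookkeeping is needed. What the paper's route buys is an explicit description of $\Im f_\alpha(J_0)$ on the canonical form, in the spirit of the counting in Theorem 6.3; what yours buys is a coordinate-free argument that reuses the Galois machinery already set up for Lemma 5.4.
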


\begin{proof}

Take the generalized Jordan canonical form $J_0$ of $\C$ and consider a Jordan block $J$ of $J_0$.
Let $m$ be a prime factor of $\mu$ with multiplicity $c$,then $J$ is a $sc\times sc$ matrix.
It is easy to see that $f_\alpha(J)\neq O$ if and only if $J$ corresponds to the elementary divisor $m^c$.
First, $f_\alpha(J)=\dfrac{m^c}{x-\alpha}(J)\dfrac{f}{m^c}(J)$, hence $\Im f_\alpha(J)=\Im \dfrac{m^c}{x-\alpha}(J)$.
Take $0\neq u=\sum\limits^{s-1}_{j=0}\alpha^ju^j\in K^{sc}$, where $u^j\in F^{sc},0\leq j\leq s-1$.
We claim that there exist $rs$ linearly independent vectors $u^j_i,1\leq i\leq r,0\leq j\leq s-1$ in $F^n$ satisfying $\Im f_\alpha(J_0)=\span\{u_1,\cdots,u_r\}$, which only requires proving that $u^j\in F^{sc},0\leq j\leq s-1$ are linearly independent.

First, $Ju=\alpha u$, i.e., $J\sum\limits^{s-1}_{j=0}\alpha^ju^j=\alpha\sum\limits^{s-1}_{j=0}\alpha^ju^j=\sum\limits^{s-1}_{j=0}\alpha^{j+1}u^j=\sum\limits^{s-2}_{j=0}\alpha^{j+1}u^j-\sum\limits^{s-1}_{j=0}a_{s-j}\alpha^ju_{s-1}$.
By the $F$-linear independence of $1,\alpha,\cdots,\alpha^{s-1}$, $\span\{u^0,\cdots,u^{s-1}\}$ is a $J$-invariant subspace,
This implies $s|\dim\span\{u^0,\cdots,u^{s-1}\}$.
Also, $u\neq 0$ and $\dim\span\{u^0,\cdots,u^{s-1}\}\leq s$, so $\dim\span\{u^0,\cdots,u^{s-1}\}=s$.
Thus the claim holds.

(1) Let $Q\in\GL(n,F)$ satisfy $\forall 1\leq i\leq r,0\leq j\leq s-1,v^i_j=Qu^i_j$.
Let $A=QJQ^{-1}$, then it satisfies the requirement.

(2) First prove that $J_0$ satisfies the theorem. Let $V$ be a subspace of $F^n$ satisfying $\Im f_\alpha(J_0)\subseteq V_K$.
By the definition of $V_K$, $\forall 1\leq i\leq r,0\leq j\leq s-1,u^i_j\in V$, hence $\dim V\geq rs$.
Assume that $v^j_i$ are linearly dependent. Let $W=\span\{v^j_i|1\leq i\leq r,0\leq j\leq s-1\}$,
then $\dim W<rs$ and $\Im f_\alpha(J_0)\subseteq W_K$, leading to a contradiction. Hence $J_0$ satisfies the theorem.
Let $Q\in\GL(n,F)$ satisfy $A=QJ_0Q^{-1}$, then $\Im f_\alpha(A)=Q\Im f_\alpha(J_0)$,
Here $Q^{-1}v_1,\cdots,Q^{-1}v_r$ form a basis of $\Im f_\alpha(J_0)$.
Also denote $Q^{-1}v_i=u_i=\sum\limits^{s-1}_{j=0}\alpha^ju^j_i$, we can see that $u^j_i$ are $F$-linear combinations of $v^j_i$.
Since $u^j_i$ are linearly independent, $v^j_i$ are linearly independent.
\end{proof}

We now establish the enumeration of equivalence classes for this case.

\begin{Theorem}
In this case, the equivalence relation in Definition 4.4 has $\dfrac{\prod\limits^{rs}_{i=1}(q^n-q^{i-1})}{\prod\limits^{r}_{i=1}(q^{rs}-q^{(i-1)s})}$ equivalence classes, each of size $\dfrac{|\C|\prod\limits^{r}_{i=1}(q^{rs}-q^{(i-1)s})}{\prod\limits^{rs}_{i=1}(q^n-q^{i-1})}$.

\end{Theorem}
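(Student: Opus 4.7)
The plan is to reduce the enumeration to a double-counting argument using Theorem 6.2 as the bridge between $K$-subspaces of $K^n$ and $F$-linearly independent tuples in $F^n$.

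First, since Lemma 6.1(2) guarantees that all equivalence classes in $\C$ have the same cardinality, the two quantities in the theorem are reciprocals multiplied by $|\C|$. So it is enough to count the number of equivalence classes, which by definition equals the number of distinct subspaces $V \subseteq K^n$ of $K$-dimension $r$ that are realizable as $\Im f_\alpha(A)$ for some $A \in \C$. Call these subspaces admissible.

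Next I would set up a bijection
\[
\Phi \colon \{\text{admissible ordered bases}\} \longleftrightarrow \{\text{ordered } rs\text{-tuples of }F\text{-linearly independent vectors in }F^n\},
\]
where an admissible ordered basis means an ordered $K$-basis $(v_1,\dots,v_r)$ of some admissible $V$. For the forward direction: given an admissible ordered basis $(v_1, \dots, v_r)$ in $K^n$, each entry of each $v_i$ lies in $K$ and is written uniquely in the $F$-basis $\{1, \alpha, \dots, \alpha^{s-1}\}$ of $K$, yielding a unique decomposition $v_i = \sum_{j=0}^{s-1} \alpha^j v_i^j$ with $v_i^j \in F^n$. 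The $F$-linear independence of the $rs$ vectors $v_i^j$ is precisely the content of Theorem 6.2(2). For the reverse direction: given $rs$ linearly independent vectors $v_i^j$ in $F^n$, form the $v_i$; Theorem 6.2(1) guarantees that $(v_1,\dots,v_r)$ is an ordered $K$-basis of the admissible subspace $\span_K\{v_1,\dots,v_r\}$. The uniqueness of the decomposition (from the $F$-basis property of $\{1, \alpha, \dots, \alpha^{s-1}\}$) makes $\Phi$ a well-defined bijection.

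Now I count both sides. The right-hand side is counted directly: choose linearly independent vectors one by one in $F^n$, giving $\prod_{i=1}^{rs}(q^n - q^{i-1})$ ordered $rs$-tuples. The left-hand side is counted by grouping: every admissible $V$ is a $K$-subspace of $K^n$ of $K$-dimension $r$, hence (viewed in $K^n$ of $K$-dimension $n$, noting $|K| = q^s$) it has exactly $\prod_{i=1}^r (q^{sr} - q^{s(i-1)})$ ordered $K$-bases. Equating the two counts gives the number of admissible subspaces as
\[
\frac{\prod_{i=1}^{rs}(q^n - q^{i-1})}{\prod_{i=1}^r (q^{rs} - q^{(i-1)s})},
\]
and the class size formula then follows from $|\C|$ divided by this count.

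The main obstacle is verifying that $\Phi$ is indeed a bijection and not just an injection in one direction; this rests squarely on both parts of Theorem 6.2. The forward step needs 6.2(2) to guarantee that every ordered basis of every admissible $V$ produces $F$-independent $v_i^j$, and the reverse step needs 6.2(1) to guarantee surjectivity onto admissible subspaces. Once these are invoked, the rest is standard Gaussian counting of ordered bases.
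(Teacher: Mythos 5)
Your proof is correct and takes essentially the approach the paper intends: the paper's own proof is a one-line appeal to Lemma 6.1 and Theorem 6.2, and your double-counting of admissible ordered bases (using 6.2(2) for the forward map, 6.2(1) for surjectivity, and standard Gaussian counting with $|K|=q^s$) is exactly the calculation being elided, with the class-size formula then following from the equal-cardinality statement in Lemma 6.1(2). No gaps.
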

\begin{proof}
 According to Lemma 6.1 and Theorem 6.2, we calculate the number of equivalence classes as
$\dfrac{\prod\limits^{rs}_{i=1}(q^n-q^{i-1})}{\prod\limits^{r}_{i=1}(q^{rs}-q^{(i-1)s})}$.
\end{proof}
Referring to Theorem 4.7, we can see that it essentially completes the missing case of $s=1$ in Theorem 6.3. This implies that Theorem 6.3 holds for any positive integer $s$. Hence, We can provide an upper bound estimate for the size of non-core sets:

\begin{Theorem}
Let $\varnothing\neq S\subseteq\C$ be a non-core set, then $|S|\leq\dfrac{4|\C|}{q}$.
\end{Theorem}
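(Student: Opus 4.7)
The plan is as follows. If $S$ is non-core, Proposition~3.4 produces a monic irreducible factor $p$ of $\mu$, of degree $s\ge 1$, with $\bigcap_{A\in S}\B(A,\mu/p)\ne\{0\}$. Let $K/F$ be the splitting field of $p$ and $\alpha\in K$ a root. Lemma~4.2 together with Proposition~2.3 (for $s=1$) and Theorem~5.5 (for $s\ge 2$) convert this into $\sum_{A\in S}\Im f_\alpha(A)\subsetneq K^n$. Picking any $K$-hyperplane $V\subseteq K^n$ containing this sum, we get $S\subseteq T_V:=\{A\in\C\mid\Im f_\alpha(A)\subseteq V\}$, and it suffices to show $|T_V|\le 4|\C|/q$.

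Next I partition $\C$ by the equivalence relation of Definition~4.4. Theorem~6.3 (which, as the paper remarks just before the statement, also covers $s=1$ via Theorem~4.7) says all classes have equal size, so $|T_V|/|\C|$ equals the fraction of classes whose image lies in $V$. Theorem~6.2 parametrizes admissible image subspaces $U=\Im f_\alpha(A)$ by $F$-linearly independent tuples $(v_i^j)\in(F^n)^{rs}$ via $v_i=\sum_{j=0}^{s-1}\alpha^j v_i^j$, with each $U$ arising from $|\GL_r(K)|=\prod_{i=1}^r(q^{rs}-q^{(i-1)s})$ such tuples. Hence $|T_V|/|\C|=X_V/X$, where $X=\prod_{i=1}^{rs}(q^n-q^{i-1})$ counts all such tuples and $X_V$ counts those with $v_i\in V$ for every $i$.

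For the estimate I use the $F$-linear isomorphism $(F^n)^s\cong K^n$, $(v^0,\dots,v^{s-1})\mapsto\sum_j\alpha^j v^j$: the constraint $v_i\in V$ sends $(v_i^0,\dots,v_i^{s-1})$ into an $F$-subspace of cardinality $q^{(n-1)s}$, so dropping linear independence yields $X_V\le q^{(n-1)sr}$. Writing $X=q^{nrs}\prod_{j=n-rs+1}^{n}(1-q^{-j})$ and applying Lemma~2.1 ($C\le 4$) gives $X\ge q^{nrs}/4$. Therefore
$$\frac{|T_V|}{|\C|}\;=\;\frac{X_V}{X}\;\le\;\frac{4}{q^{rs}}\;\le\;\frac{4}{q},$$
since $rs\ge 1$: otherwise $f_\alpha(A)=0$ on $\C$, which would force $m_A\mid f_\alpha$ in contradiction to $m_A=\mu$.

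The main hurdle is unifying the linear and higher-degree cases: the class-count statements in Theorems~4.7 and~6.3 look structurally different, and Theorem~6.2's admissibility condition is subtle, but once both are reframed through the tuple-count $X_V/X$ (with admissibility dropped when upper-bounding $X_V$), the final estimate reduces cleanly to Lemma~2.1.
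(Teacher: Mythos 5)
The proposal is correct and follows essentially the same route as the paper: Proposition~3.4 produces the offending irreducible factor, Theorem~4.3/Theorem~5.5 convert non-coreness into containment of all $\Im f_\alpha(A)$ in a hyperplane $V\subseteq K^n$, and the equal-size equivalence classes of Theorem~6.3 (counted via the tuples of Theorem~6.2) together with Lemma~2.1 give the bound $4|\C|/q^{rs}\le 4|\C|/q$. The only difference is cosmetic --- you bound the tuple count $X_V$ by $q^{(n-1)rs}$ directly rather than writing out the exact product $\prod_{i=1}^{r}(q^{(n-1)s}-q^{(i-1)s})$ as the paper does, and you supply a few justifications (e.g.\ $rs\ge 1$) that the paper leaves implicit.
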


\begin{proof}
According to Proposition 3.4, there exists an irreducible factor $p$ of $\mu_\C$ such that $$\bigcap_{A\in S}\B\left(A,\dfrac{\mu_\C}{p}\right) \neq \{0\}.$$
Let $K$ be the splitting field of $p$ over $F$ (if $\deg p=1$ then take $F=K$).
Let $\alpha \in K$ be a root of $p$, and denote $s=\deg p$.
For $A\in\C$, let $r=\dim\Im_K f_a(A)$.
By Theorem 4.3 and Theorem 5.5, there exists an $(n-1)$-dimensional subspace $V$ of $K^n$ such that $\forall A\in S,\Im f_\alpha(A)\subseteq V$.

Then using Lemma 6.1 and the counting result from Theorem 6.3, we have
\begin{equation*}
\begin{aligned}
|S|&\leq\dfrac{|\C|\prod\limits^{r}_{i=1}(q^{rs}-q^{(i-1)s})}{\prod\limits^{rs}_{i=1}(q^n-q^{i-1})}\cdot\prod\limits^{r}_{i=1}\dfrac{q^{(n-1)s}-q^{(i-1)s}}{q^{rs}-q^{(i-1)s}}=\dfrac{|\C|\prod\limits^{r}_{i=1}(q^{(n-1)s}-q^{(i-1)s})}{\prod\limits^{rs}_{i=1}(q^n-q^{i-1})}
\\&\leq\dfrac{|\C|q^{(n-1)rs}}{q^{nrs}\prod\limits^{\infty}_{i=1}(1-q^{-i})}\leq\dfrac{4|\C|}{q^{rs}}
\leq \dfrac{4|\C|}{q}.\qed
\end{aligned}
\end{equation*}
\end{proof}

\section{Asymptotic Proportion of Pure Core Sets}

To discuss asymptotic results, we first give a lemma.

\begin{Lemma}

Let $c\in(0,1)$, then the proportion of subsets of an $N$-element set with size at most $\dfrac{N}{2}(1-c)$ is at most $e^{-\frac{c^2N}{4}}$.
\end{Lemma}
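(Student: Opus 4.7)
The plan is to reinterpret the counting statement probabilistically. If we pick a subset $S$ of the $N$-element set uniformly at random, this is equivalent to including each element independently with probability $1/2$, so $X := |S|$ is a sum of $N$ i.i.d.\ Bernoulli$(1/2)$ random variables with $\mathbb{E}[X] = N/2$. The desired proportion is then exactly $\mathbb{P}\bigl[X \leq \tfrac{N}{2}(1-c)\bigr]$, and the lemma reduces to a lower-tail concentration bound for the symmetric binomial distribution.

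Next I would apply the standard multiplicative Chernoff bound for the lower tail: for a sum $X$ of independent $\{0,1\}$-valued random variables with mean $\mu$ and any $\delta \in (0,1)$,
$$\mathbb{P}\bigl[X \leq (1-\delta)\mu\bigr] \leq \exp\!\left(-\frac{\delta^2 \mu}{2}\right).$$
Setting $\delta = c$ and $\mu = N/2$ yields exactly the right-hand side $\exp(-c^2 N/4)$. If a self-contained derivation is preferred rather than a citation, the bound follows from Markov's inequality applied to $e^{-tX}$: since $\mathbb{E}[e^{-tX}] = \bigl(\tfrac{1+e^{-t}}{2}\bigr)^N$, one gets $\mathbb{P}[X \leq a] \leq e^{ta}\bigl(\tfrac{1+e^{-t}}{2}\bigr)^N$ for any $t > 0$, and optimizing over $t$ (or using the standard bound $\tfrac{1+e^{-t}}{2} \leq e^{-t/2 + t^2/8}$) produces the desired exponent.

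The main obstacle is essentially just bookkeeping with constants: several common forms of Chernoff's inequality yield exponents like $c^2 N/2$, $c^2 N/3$, or $c^2 N/4$ depending on the normalization, and one must use the version tuned to the symmetric Bernoulli case so the constant $1/4$ comes out exactly. Otherwise the argument is short and routine, and it plugs directly into the subsequent probabilistic analysis of pure core sets announced in the introduction.
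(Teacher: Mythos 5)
Your proposal is correct and follows essentially the same route as the paper: both reinterpret the proportion as the lower-tail probability $\mathbb{P}\bigl[X \leq \tfrac{N}{2}(1-c)\bigr]$ for a sum of $N$ independent Bernoulli$(1/2)$ indicators and then invoke Chernoff's inequality with mean $N/2$ to obtain the exponent $c^2N/4$. Your version is in fact slightly more careful than the paper's, since you pin down the exact form of the Chernoff bound (lower tail, $\exp(-\delta^2\mu/2)$) needed to produce the constant $1/4$, whereas the paper cites Chernoff's inequality without specifying the variant.
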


\begin{proof}

Without loss of generality, let $S=\{i\in\Z|1\leq i\leq N\}$ be an $N$-element set. Using the probabilistic method, let $D\subseteq S$ be a random subset.
Let random variables $X_i=\begin{cases}1 & i\in D,\\0 & i\notin D,\end{cases}$ and $X=\sum\limits^N_{i=1}X_i$.
We have $\mathbb{P}(X_i=1)=\dfrac{1}{2},1\leq i\leq N$ and $X_i,1\leq i\leq N$ are mutually independent,
Also $X=|S|,\E(X)=\dfrac{n}{2}$.
By Chernoff's inequality, $\mathbb{P}(|S|=X\leq\dfrac{N}{2}(1-c))\leq e^{-\frac{c^2N}{4}}$.
\end{proof}

\begin{Theorem}
Let $S\subseteq M_n(\Fq)$ be a random subset, 
then$$\lim\limits_{q\to\infty}\mathbb{P}(S\text{ is a pure core set})=\lim\limits_{q\to\infty}\mathbb{P}(S\text{ is a core set})=1.$$
\end{Theorem}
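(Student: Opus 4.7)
The plan is to combine Theorem 6.4 with the Chernoff-type bound in Lemma 7.1 via a union bound over similarity classes. First I would observe that being pure core implies being core: if every $S\cap\C$ is a core set, then by Proposition 1.3(2) the union $S=\bigcup_\C(S\cap\C)$ is a core set. So $\mathbb{P}(S\text{ is a core set})\geq\mathbb{P}(S\text{ is a pure core set})$, and it suffices to prove the latter tends to $1$.

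Next I would analyze each similarity class. If $S$ fails to be a pure core set, there is a similarity class $\C$ with $\varnothing\neq S\cap\C$ a non-core set; Theorem 6.4 then gives $|S\cap\C|\leq 4|\C|/q$, and combined with non-emptiness this forces $|\C|\geq q/4$. Since $S$ is a uniformly random subset of $M_n(\Fq)$ (each element included with probability $\tfrac12$ independently), $S\cap\C$ is a uniformly random subset of $\C$. Applying Lemma 7.1 with $c=1-8/q$ (valid once $q>8$) gives
\[
\mathbb{P}\bigl(|S\cap\C|\leq 4|\C|/q\bigr)=\mathbb{P}\bigl(|S\cap\C|\leq\tfrac{|\C|}{2}(1-c)\bigr)\leq e^{-c^2|\C|/4}\leq e^{-c^2q/16},
\]
where the last inequality uses $|\C|\geq q/4$.

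Then I would take a union bound. The number of similarity classes in $M_n(\Fq)$ is polynomial in $q$ (a classification by invariant factor sequences of total degree $n$ gives an $O(q^n)$ upper bound). Hence
\[
\mathbb{P}(S\text{ is not pure core})\leq\sum_{\C:|\C|\geq q/4}e^{-c^2q/16}\leq\mathrm{poly}(q)\cdot e^{-c^2q/16}\xrightarrow[q\to\infty]{}0,
\]
since $c\to 1$ and the exponential swamps the polynomial factor. The reduction in the first paragraph then yields the theorem.

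The crux of the argument is recognizing that Theorem 6.4 plays a double role: it supplies the gap between the threshold $4|\C|/q$ and the expected value $|\C|/2$ that powers the Chernoff tail, and it simultaneously rules out all similarity classes of size below $q/4$ from being failure sources. The only auxiliary input is the standard polynomial-in-$q$ count of similarity classes of $M_n(\Fq)$; everything else is routine.
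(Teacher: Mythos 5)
Your proof is correct and follows essentially the same route as the paper: Theorem 6.4 supplies the gap below the mean $|\C|/2$, Lemma 7.1 supplies the Chernoff tail, and one controls the polynomially many similarity classes at once. The only (harmless) differences are that you use a union bound where the paper multiplies independent per-class probabilities, and you exclude small classes by noting that non-emptiness together with Theorem 6.4 forces $|\C|\geq q/4$, whereas the paper fixes $q>10$, uses the threshold $2|\C|/5$, and bounds $|\C|\geq q-1$ for $\deg\mu\geq 2$.
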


\begin{proof}

It suffices to prove $\lim\limits_{q\to\infty}\mathbb{P}(S\text{ is a pure core set})=1$. Without loss of generality, assume $q>10$.
By Theorem 6.4, for a non-core set $\varnothing\neq S\subseteq\C$, we have $|S|\leq\dfrac{2|\C|}{5}$.

First, the number of similarity classes is at most $q^{n^2}$, and the intersections of $S$ with different similarity classes are independent with respect to the core property.
Take any similarity class $\C\subseteq M_n(\Fq)$ and random subset $S\subseteq\C$, we only need to consider the case $\deg\mu\geq 2$.

In this case, $|\C|\geq\dfrac{\prod\limits^{rs}_{i=1}(q^n-q^{i-1})}{\prod\limits^{r}_{i=1}(q^{rs}-q^{(i-1)s})}\geq\dfrac{(q^n-q^{n-1})^{rs}}{q^{r^2 s}}\geq q-1$.
By Lemma 7.1, $$\mathbb{P} (S\text{ is not a core set})\leq\mathbb{P}(S\leq\dfrac{2|\C|}{5})\leq e^{-\frac{|C|}{100}}\leq e^{-\frac{q-1}{100}}\leq e^{-\frac{q}{200}}.$$
Returning to the original assumption, $\mathbb{P}(S\text{ is a pure core set})\geq(1-e^{-\frac{q}{200}})^{q^{n^2}}$.
Note that $\lim\limits_{q\to\infty}(1-e^{-\frac{q}{200}})^{q^{n^2}}=1$,
hence $\lim\limits_{q\to\infty}\mathbb{P}(S\text{ is a pure core set})=1$.
The theorem is proved.
\end{proof}

{\small

\textsc{Department of Mathematical Sciences}, Tsinghua University, Beijing, 100084, P. R. China.

\textit{Email address}: wanghong24@mails.tsinghua.edu.cn 

\textsc{Zhili College}, Tsinghua University, Beijing, 100084, P. R. China.

\textit{Email address}: 
yizhi-zh24@mails.tsinghua.edu.cn

}
\end{document}